\newtheorem{theorem}{Theorem}
\newtheorem*{theorem*}{Theorem}
\newtheorem{lemma}{Lemma}
\newtheorem{proposition}{Proposition}
\newtheorem{claim}{Claim}
\theoremstyle{definition}
\newtheorem*{definition*}{\bf Definition}
\newtheorem*{remark*}{Remark}
\newtheorem*{example*}{\bf Example}
\newcommand{\loc}{{\rm loc}}
\newcommand{\Real}{{\rm Re}}
\begin{document}

\title[Supercritical fractional diffusion equation]{On the supercritical fractional diffusion equation with Hardy-type drift}

\author{D.\,Kinzebulatov, K.R.\,Madou, Yu.\,A.\,Sem\"{e}nov}

\keywords{Non-local operators, heat kernel estimates, desingularization}

\subjclass[2020]{35K08, 47D07 (primary), 60J35 (secondary)}

\begin{abstract}
We study the heat kernel of the supercritical fractional diffusion equation with the drift in the critical H\"{o}lder space. We show that such a drift can have point irregularities strong enough to make the heat kernel vanish at a point for all $t>0$.
\end{abstract}

\thanks{The research of D.K.\,is supported by the Natural Sciences and Engineering Research Council 
of Canada (grant RGPIN-2017-05567)}

\address{Universit\'{e} Laval, D\'{e}partement de math\'{e}matiques et de statistique, 1045 av.\,de la M\'{e}decine, Qu\'{e}bec, QC, G1V 0A6, Canada}

\email{damir.kinzebulatov@mat.ulaval.ca}

\address{Universit\'{e} Laval, D\'{e}partement de math\'{e}matiques et de statistique, 1045 av.\,de la M\'{e}decine, Qu\'{e}bec, QC, G1V 0A6, Canada}

\email{kodjo-raphael.madou.1@ulaval.ca}

\address{University of Toronto, Department of Mathematics, 40 St.\,George Str, Toronto, ON, M5S 2E4, Canada}

\email{semenov.yu.a@gmail.com}

\maketitle

\section{Introduction and main result}

\label{main_sect}

\textbf{1.~}The present paper concerns the fractional diffusion equation 
\begin{equation}
\label{eq}
\partial_t u + (-\Delta)^{\frac{\alpha}{2}}u - \mathsf{f} \cdot \nabla u=0, \quad \mathsf{f}:\mathbb R^d \rightarrow \mathbb R^d, \quad d \geq 3
\end{equation}
in the critical ($\alpha=1$) and the supercritical  regimes ($0<\alpha<1$). The terminology ``critical'' 
and ``supercritical'' refers to the fact that when $\alpha=1$ the drift term $\mathsf{f} \cdot \nabla$ is of the same weight as the diffusion term $(-\Delta)^{\frac{\alpha}{2}}$,
while if $\alpha<1$ then, formally, $\mathsf{f} \cdot \nabla$ dominates $(-\Delta)^{\frac{\alpha}{2}}$, so the standard perturbation-theoretic techniques are not applicable.

 This equation continues to attract interest, 
motivated, in particular, by applications in hydrodynamics. 
In the supercritical regime, it was studied by 
Constantin-Wu \cite{CW} who established H\"{o}lder continuity of solution $u$ assuming 
that the vector field $\mathsf{f}$  is in $C^{0,1-\alpha}$
and ${\rm div\,}\mathsf{f}=0$. Later the H\"{o}lder continuity of solution without the 
divergence-free assumption on the drift was established by Silvestre \cite{S}. 
The  H\"{o}lder continuity exponent $1-\alpha$  arises in both papers from the scaling arguments (in a variant of the De Giorgi method and a comparison principle, respectively). Maekawa-Miura \cite{MM} considered \eqref{eq}, in particular in the supercritical regime, and established an upper bound on the heat kernel when $\mathsf{f} \in C^{0,1-\alpha}$, ${\rm div\,}\mathsf{f}=0$. 
Recently, Menozzi-Zhang \cite{MZ} established two-sided heat kernel bound in the ``sub''-supercritical case $|\mathsf{f}| \in C^{0,\gamma}$, $\gamma>1-\alpha$; Zhao \cite{Z} established weak well-posedness for the SDE associated with \eqref{eq} provided that $\|\mathsf{f}\|_{C^{0,1-\alpha}}$ is sufficiently small; see also \cite{XZ, ZZ}. (In fact, \cite{MM,MZ} allow time-dependent coefficients that can grow at infinity, \cite{MZ,XZ, Z,ZZ} deal with more general than $(-\Delta)^{\frac{\alpha}{2}}$ diffusion term.) See also references therein.

Below we show that the class $C^{0,1-\alpha}$ contains vector fields
that have point irregularities strong enough to make the heat kernel of \eqref{eq} vanish  (in the $y$ variable,  for all $t>0$). More precisely, we consider as the drift $\mathsf{f}$ a bounded, infinitely differentiable outside of the origin vector field $b:\mathbb R^d \rightarrow \mathbb R^d$ such that
\begin{equation}
\label{hardy}
b(x)=\kappa|x|^{-\alpha}x \quad \text{ in } \{|x|<1\}
\end{equation}
where $\kappa>0$. 
The vector field $b$ is, in a sense, a prototypical representative of the class $C^{0,1-\alpha}$. 
We establish a vanishing upper bound on the heat kernel, see Theorem \ref{thm}.

In order to keep the paper short, we will be assuming that on $\{|x| \geq 1\}$ the derivatives of the vector field $b$ are uniformly bounded, and $|{\rm div\,}b|$ is less than $C|x|^{-\alpha}$ for some constant $C>0$ (e.g.\,$b$ can have compact support). The method of the paper can handle $b(x)=\kappa|x|^{-\alpha}x$, $x \in \mathbb R^d$.

The critical regime $\alpha=1$, with $\mathsf{f}$ in ${\rm BMO}$ and divergence-free,  was studied by Caffarelli-Vasseur \cite{CV} and, later, by Kiselev-Nazarov \cite{KN}. The critical regime without the divergence-free condition but assuming that $|\mathsf{f}| \in L^\infty$ was considered by Silvestre \cite{S}. Our result includes $\alpha=1$ as well.

Set
$\gamma(s):=\frac{2^s\pi^\frac{d}{2}\Gamma(\frac{s}{2})}{\Gamma(\frac{d}{2}-\frac{s}{2})}.$

\begin{theorem}
\label{thm}
 Let $d \geq 3$, $0<\alpha \leq 1$. Let $b$ be defined by \eqref{hardy} with $\kappa>0$.
Then the heat kernel of the operator $\Lambda=(-\Delta)^{\frac{\alpha}{2}} - b \cdot \nabla$,
constructed in Proposition \ref{prop1} below, determines a $C_0$ semigroup in $L^r=L^r(\mathbb R^d)$ for all $r \in [1,\infty[$,  and satisfies for all $0<t \leq 1$, $x$, $y \in \mathbb R^d$
\begin{equation}
\label{ub}
0 \leq e^{-t\Lambda}(x,y) \leq Ct^{-\frac{d}{\alpha}} \bigl[1 \wedge t^{-\frac{\beta}{\alpha}}|y|^\beta \bigr]
\end{equation}
(possibly after a modification on a measure zero set in $\mathbb R^d \times \mathbb R^d$), where the order of vanishing $\beta \in ]0,\alpha[$ is determined from the equation
\begin{equation}
\label{beta_eq}
\beta\frac{d+\beta-2}{d+\beta-\alpha} \frac{\gamma(d+\beta-2)}{\gamma(d+\beta-\alpha)}=\kappa \quad (\text{see Fig.\,1}).
\end{equation}
\end{theorem}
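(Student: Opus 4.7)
I would combine a standard Nash-type bound with a weighted Nash bound produced by a Doob-type substitution, first for a smoothly regularized drift $b_n$, then pass to the limit.

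\textbf{Setup and Lyapunov function.} Choose smooth bounded $b_n$ with $b_n=b$ on $\{|x|\geq 1/n\}$ and $|b_n|\leq|b|$, so that $\Lambda_n:=(-\Delta)^{\alpha/2}-b_n\cdot\nabla$ generates a Feller semigroup with smooth positive kernel $p_n(t,x,y)$ by classical theory for bounded H\"older drifts. The central computation, for the purely Hardy drift, is that $\phi(y):=|y|^\beta$ satisfies $\Lambda^*\phi=0$ on $\mathbb R^d\setminus\{0\}$, where $\Lambda^*=(-\Delta)^{\alpha/2}+\nabla\cdot b$. This follows from the analytically continued Riesz-potential identity $(-\Delta)^{\alpha/2}|y|^\beta = -\tfrac{2^{\alpha-1}\beta\,\Gamma(\frac{\alpha-\beta}{2})\Gamma(\frac{d+\beta}{2})}{\Gamma(\frac{2-\beta}{2})\Gamma(\frac{d+\beta-\alpha}{2})}\,|y|^{\beta-\alpha}$, together with $b\cdot\nabla|y|^\beta=\kappa\beta|y|^{\beta-\alpha}$ and $(\mathrm{div}\,b)|y|^\beta=\kappa(d-\alpha)|y|^{\beta-\alpha}$; setting the total coefficient to zero is exactly \eqref{beta_eq}, which in turn forces $\beta\in(0,\alpha)$.

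\textbf{Two uniform bounds.} Integration by parts yields $\langle -b\cdot\nabla f,f\rangle=\tfrac{\kappa(d-\alpha)}{2}\int|x|^{-\alpha}f^2\geq 0$, so the fractional Hardy inequality makes $\Lambda_n$ accretive on $L^2$; combined with $L^1$-contractivity (which follows from $\Lambda_n^*1\geq 0$, ensured by an appropriate choice of $b_n$) and the Nash inequality, standard Nash iteration gives $p_n(t,x,y)\leq Ct^{-d/\alpha}$ uniformly in $n$. For the weighted bound, let $f\geq 0$ and set $u_n(t,y):=(e^{-t\Lambda_n^*}f)(y)$, $v_n:=u_n/\phi$. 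A product-rule expansion of $(-\Delta)^{\alpha/2}(\phi v_n)$, combined with $\Lambda^*\phi=0$, produces the (nonlocal) Doob-transformed equation $\partial_t v_n + (-\Delta)^{\alpha/2}v_n + b_n\cdot\nabla v_n - \phi^{-1}\mathcal E(v_n,\phi) = 0$, where $\mathcal E(v,\phi)(y):=c_{d,\alpha}\,\mathrm{p.v.}\!\int\frac{(v(z)-v(y))(\phi(z)-\phi(y))}{|y-z|^{d+\alpha}}\,dz$. A weighted Nash-type iteration for this transformed equation then yields $\|v_n(t,\cdot)\|_\infty\leq Ct^{-(d+\beta)/\alpha}\|f\|_1$, hence $p_n(t,x,y)\leq C|y|^\beta t^{-(d+\beta)/\alpha}$, uniformly in $n$.

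\textbf{Limit and main obstacle.} The two uniform bounds combine to give $p_n(t,x,y)\leq Ct^{-d/\alpha}[1\wedge t^{-\beta/\alpha}|y|^\beta]$, and a weak-limit argument identifies a subsequence of $p_n$ with the heat kernel of $\Lambda$ constructed in Proposition \ref{prop1}; the pointwise inequality survives the limit up to a null-set modification, and the $C_0$-semigroup property in $L^r$ for $r\in[1,\infty[$ follows from the standard interpolation between the $L^1$-contractivity and the $L^\infty$-bound provided by the Nash estimate. The crux is the weighted bound: the Doob transform is nonlocal, the correction $\phi^{-1}\mathcal E(\cdot,\phi)$ is singular at the origin, and it interacts delicately with the Hardy drift $b_n\cdot\nabla v_n$. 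Verifying, uniformly in the regularization, that the transformed operator still affords the coercivity needed to run a Nash/Moser-type iteration is the main analytic challenge of the proof.
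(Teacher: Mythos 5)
Your proposal gets the geometry of the proof right: identify $\phi(y)=|y|^\beta$ as the Lyapunov function solving $\Lambda^*\phi=0$ (your Gamma-function identity is a correct restatement of \eqref{beta_eq}, since $(d+\beta-2)\Gamma(\tfrac{d+\beta-2}{2})=2\Gamma(\tfrac{d+\beta}{2})$ and $\Gamma(-\tfrac{\beta}{2})=-\tfrac{2}{\beta}\Gamma(\tfrac{2-\beta}{2})$), prove an unweighted Nash bound and a weighted one, and combine them. That overall architecture agrees with the paper.

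However, there is a genuine gap at exactly the point you flag as ``the main analytic challenge,'' and your sketched route is not the one that resolves it. You propose to pass to the Doob-transformed function $v_n=u_n/\phi$, write the transformed equation involving the nonlocal carr\'{e}-du-champ correction $\phi^{-1}\mathcal E(v_n,\phi)$, and run a ``weighted Nash/Moser iteration.'' You do not carry this out, and it is not clear it can be carried out as stated: the correction term $\phi^{-1}\mathcal E(\cdot,\phi)$ is singular at the origin, it is not sign-definite, and in the supercritical regime $\alpha<1$ the drift $b_n\cdot\nabla v_n$ has higher order than the diffusion, so the usual coercivity that makes a Moser iteration run is not available by inspection. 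This is precisely the obstacle the paper is designed to avoid: it never writes down the Doob-transformed PDE or estimates $\mathcal E(v,\phi)$.

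What the paper actually does is reduce to an abstract weighted Nash initial estimate (Theorem~\ref{thmB}, proved in \cite{KS}), which converts the weighted $L^1\to L^\infty$ bound into four checkable hypotheses. The unweighted parts $(B_{11})$--$(B_{13})$ and the weight's measure-theoretic size $(B_{21})$--$(B_{23})$ are routine. The whole difficulty is concentrated in $(B_3)$: a uniform quasi-$L^1$-contractivity bound for the conjugated semigroup $\psi_s e^{-t\Lambda}\psi_s^{-1}$. This is established not by any iteration on a transformed equation but by proving $L^1$-accretivity of $\lambda+\widetilde Q$ where $\widetilde Q$ is the closure of $\phi_{n,\varepsilon}\Lambda^\varepsilon\phi_{n,\varepsilon}^{-1}$, and the crucial device is the non-standard mollification of the weight by the adjoint semigroup itself, $\phi_{n,\varepsilon}=n^{-1}+e^{-(\Lambda^\varepsilon)^*/n}\psi$. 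With this choice, the accretivity computation reduces to a pointwise supersolution inequality $(\Lambda^\varepsilon)^*\psi\geq -\hat c s^{-1}\psi - V_\varepsilon$ (Lemma~\ref{aest_lem}), which is where the Lyapunov relation $\Lambda^*|x|^\beta=0$ is actually used, and $\|V_\varepsilon\|_1\to 0$ as $\varepsilon\downarrow 0$. If you want to complete your argument along your own lines, you would need to supply the entire weighted Nash iteration you invoke as a black box; if you instead follow the paper, the semigroup-based mollifier and the accretivity-in-$L^1$ computation are the steps you are missing.

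Two smaller points. First, your approximation $b_n=b$ on $\{|x|\geq 1/n\}$ still has an unbounded-in-$n$ $C^2$-norm near the origin, which makes the claimed ``classical theory for bounded H\"older drifts'' delicate in the supercritical case $\alpha<1$ where $b\cdot\nabla$ dominates $(-\Delta)^{\alpha/2}$; the paper sidesteps this by adding the vanishing viscosity $-\varepsilon\Delta$ so that the approximating generators are unambiguous second-order operators with bounded smooth coefficients. Second, the passage from the $L^1(\psi\,dx)\to L^\infty$ bound to the pointwise statement \eqref{ub} requires making precise ``a weak-limit argument identifies a subsequence of $p_n$ with the heat kernel,'' which in the paper is an $L^2$ Cauchy argument (Claims~\ref{claim1}--\ref{claim2}) rather than compactness.
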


The equation \eqref{beta_eq} is the condition that $|x|^\beta$ is the Lyapunov function of the formal operator $(-\Delta)^{\frac{\alpha}{2}} + \nabla \cdot \kappa|x|^{-\alpha}x$, i.e $\big[(-\Delta)^{\frac{\alpha}{2}} + \nabla \cdot \kappa|x|^{-\alpha}x\big]|x|^{\beta}=0.$

Theorem \ref{thm} is proved by considering operator $\Lambda_r$ in the weighted space $L^1(\mathbb R^d,\psi dx)$, with appropriate vanishing weight $\psi(x) \approx (1 \wedge |x|)^\beta$, where the operator is ``desingularized'', and the semigroup $e^{-t\Lambda_r}$ is $L^1(\mathbb R^d,\psi dx) \rightarrow L^\infty$ ultracontractive. The desingularization procedure was introduced by Milman-Sem\"{e}nov to establish two-sided heat kernel bounds for the Schr\"{o}dinger operator $-\Delta + \kappa|x|^{-2}$ \cite{MS0,MS1, MS2}. The non-symmetric, non-local desingularization for $\Lambda=(-\Delta)^{\frac{\alpha}{2}} - \kappa |x|^{-\alpha}x \cdot \nabla$ in the subcritical case $1<\alpha<2$ was developed  in Kinzebulatov-Sem\"{e}nov-Szczypkowski \cite{KSS} ($\kappa<0$) and Kinzebulatov-Sem\"{e}nov \cite{KS} ($\kappa>0$). See Theorem \ref{thmB} below. 
The desingularization procedure also works in the critical $\alpha=1$ and the supercritical $\alpha<1$ regimes, as we show in this paper. 
This is  rather notable, since $\alpha \leq 1$ is known 
to present its own set of difficulties compared to $1<\alpha<2$.

It should be noted that in \cite{KS} ($1<\alpha<2$) the authors proved, using perturbation-theoretic arguments, the following two-sided heat kernel bound
\begin{equation}
\label{bd_KS}
e^{-t\Lambda}(x,y) \approx e^{-t(-\Delta)^{\frac{\alpha}{2}}}(x,y)[1 \wedge t^{-\frac{1}{\alpha}}|y|]^{\beta}
\end{equation}
for $\beta \in ]0,\alpha[$ determined by \eqref{beta_eq}. The bound \eqref{ub} describes the behaviour of the heat kernel around the singularity of the drift, but it leaves open the question of  two-sided bound for \eqref{eq} with $\mathsf{f}:=b$ in the critical and the supercritical regimes. We plan to address it in the future.

The case of $b$ with $\kappa<0$, which corresponds to the attracting drift, can be treated by modifying the argument in \cite{KSS}:
$$
e^{-t\Lambda}(x,y) \leq Ct^{-\frac{d}{\alpha}} \bigl[1 \wedge t^{-\frac{\beta}{\alpha}}|y|^\beta\bigr] \quad \text{for $\beta \in ]-d+\alpha,0]$ such that $\Lambda^*|x|^{\beta}=0$}.
$$ 
We will not be proving this bound here (in fact, to make this result complete one has to prove the lower bound). Let us only mention that the construction of the heat kernel requires an energy inequality in some $L^r$, $r \geq 2$ (see \cite{KSS}), which imposes a constraint from below on the admissible values of $\kappa<0$ (cf.\,\cite{Z}). Namely, multiplying the equation by $u|u|^{r-2}$ and integrating, we have 
$$
\frac{2}{r}\partial_t\langle|u|^r\rangle - \lambda \langle |u|^r \rangle + \Real\langle (-\Delta)^{\frac{\alpha}{2}}u,u|u|^{r-2}\rangle - |\kappa|\frac{d-\alpha}{r} \langle |x|^{-\alpha},|u|^r \rangle \leq 0, \quad \text{ for some } \lambda>0.
$$
Now, applying the fractional Hardy inequality 
$$\Real\langle (-\Delta)^{\frac{\alpha}{2}}u,u|u|^{r-2}\rangle \geq c_{d,\alpha,r}\langle |x|^{-\alpha},|u|^r \rangle$$ with the sharp constant $c_{d,\alpha,r}$ (see \cite{BJLP}), we arrive at the condition $|\kappa|\frac{d-\alpha}{r}<c_{d,\alpha,r}$, which yields a constraint on $\kappa<0$ from below.
 In fact, in the local case $\alpha=2$, some aspects 
of the regularity theory of the corresponding parabolic equation depend on this constraint, see \cite{KS2}. 
It is interesting 
to note that, for $\alpha<2$, for every $\kappa<0$ there exists a $\beta \in ]-d,0[$ such that $\Lambda^*|x|^{\beta}=0$. (In principle, this open up a possibility to verify accretivity of $\Lambda$ in the weighed space $L^1(\mathbb R^d,\psi dx)$, $\psi(x) \approx (1 \wedge |x|)^\beta$, for any $\kappa<0$, and hence to construct a $C_0$ semigroup there. We plan to address this matter in detail elsewhere.)

In the subcritical regime $1<\alpha<2$ there is a greater variety of classes of admissible drifts having critical-order singularities. In particular, Bogdan-Jakubowski \cite{BJ} established two-sided heat kernel bounds for \eqref{eq} with $\mathsf{f}$ in the Kato class.
Regarding the case ${\rm div\,}\mathsf{f}=0$, see Jakubowski \cite{J}, Maekawa-Miura \cite{MM} who considered $\mathsf{f}$  in the Campanato-Morrey class. The weak solvability and the Feller property for the corresponding SDE with drift $\mathsf{f}$ in an even larger class of weakly form-bounded vector fields were proved in Kinzebulatov-Madou \cite{KM}.

\begin{figure}
\begin{center}
\includegraphics[width=0.9\textwidth]{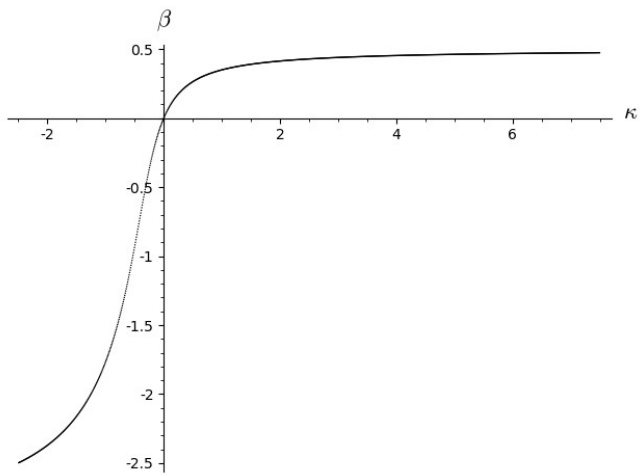}
\end{center}
\vspace*{-15mm}
\caption{The graph of $\beta$ as a function of the coefficient $\kappa$ for $d=3$ and $\alpha=\frac{1}{2}$. \label{fig1}}
\end{figure}

\medskip

\textbf{2.~}Let us describe the construction of the heat kernel in Theorem \ref{thm}. Put  $|x|_\varepsilon:=\sqrt{|x|^2+\varepsilon}$.
Let us fix smooth vector fields
$b \in C_b(\mathbb R^d) \cap C^\infty(\mathbb R^d)$, $\varepsilon>0$ such that
$$
b_\varepsilon(x):=\left\{
\begin{array}{ll}
b(x), & |x|>2,\\
\kappa|x|_\varepsilon^{-\alpha}x, & |x|<1.
\end{array}
\right.
$$
In $\{1 \leq |x| \leq 2\}$, we require uniform convergence
$$
b_\varepsilon \rightarrow b, \quad \nabla_{x_i} b_\varepsilon \rightarrow  \nabla_{x_i} b, \quad  \nabla^2_{x_ix_j} b_\varepsilon \rightarrow  \nabla^2_{x_ix_j} b
$$
and $|\nabla_{x_i}b_\varepsilon| \leq \sigma_1$ ($i=1,\dots,d$), $|{\rm div\,}b_\varepsilon| \leq \sigma_2$ on $\{|x| \geq 1\}$ with constants $\sigma_1$, $\sigma_2$ independent of $\varepsilon$.

For $r \in [1,\infty[$ put
 $$\Lambda_r^\varepsilon:=-\varepsilon \Delta + (-\Delta)^{\frac{\alpha}{2}} - b_\varepsilon \cdot \nabla, \quad D(\Lambda_r^\varepsilon)=\mathcal W^{2,r} \quad (\text{Bessel space}),$$
the generator of a positivity preserving $L^\infty$ contraction quasi contraction holomorphic semigroup (e.g.\,by the Hille Perturbation Theorem, cf.\,\cite[Sect.\,8]{KS}).

\begin{proposition}
\label{prop1} Let $d \geq 3$, $0<\alpha \leq 1$. Let $b$ be defined by \eqref{hardy} with $\kappa>0$. For every $r \in [1,\infty[$, the limit
$$
s\mbox{-}L^r\mbox{-}\lim_{\varepsilon \downarrow 0} e^{-t\Lambda_r^{\varepsilon}} \quad (\text{loc.\,uniformly in $t \in [0,\infty[$})
$$ 
exists and determines a $L^\infty$ contraction positivity preserving quasi contraction  semigroup on $L^r$, say, $e^{-t\Lambda_r}$. Its generator 
$\Lambda_r$ is an appropriate operator realization of the formal operator $(-\Delta)^{\frac{\alpha}{2}} - b \cdot \nabla$ in $L^r$. 

The Sobolev embedding property and the ultracontractivity property hold:
$$
\langle \Lambda_2 u,u \rangle \geq c_S \|u\|_{\frac{2d}{d-\alpha}}^2, \quad u \in D(\Lambda_2),
$$ 
$$
\|e^{-t\Lambda_r}\|_{r \rightarrow q} \leq c_N e^{\omega_r t }t^{-\frac{d}{\alpha}(\frac{1}{r}-\frac{1}{q})}, \quad t \in [0,\infty[, \quad 1 \leq r < q \leq \infty,
$$
where $c_S$, $c_N$ are generic constants. 

$e^{-t\Lambda_r}$ is a semigroup of integral operators. 
\end{proposition}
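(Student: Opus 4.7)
The plan is to derive $\varepsilon$-uniform energy and ultracontractivity estimates for the smooth approximations $\Lambda_r^\varepsilon$ (for which the semigroup and its basic qualitative properties are already granted by the excerpt), and then pass to the limit $\varepsilon\downarrow 0$ via Trotter--Kato.

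The decisive structural input is the sign of the divergence of the regularized drift: in $\{|x|<1\}$,
\[
{\rm div\,}b_\varepsilon(x)=\kappa|x|_\varepsilon^{-\alpha}\Bigl(d-\alpha\frac{|x|^2}{|x|_\varepsilon^2}\Bigr)\ge \kappa(d-\alpha)|x|_\varepsilon^{-\alpha}\ge 0,
\]
while $|{\rm div\,}b_\varepsilon|\le \sigma_2$ on $\{|x|\ge 1\}$ by hypothesis. For $u\in C_c^\infty$ I would integrate the drift term by parts,
\[
-\langle b_\varepsilon\cdot\nabla u,\,u|u|^{r-2}\rangle=\tfrac{1}{r}\langle{\rm div\,}b_\varepsilon,\,|u|^r\rangle\ge -\tfrac{\sigma_2}{r}\|u\|_r^r,
\]
and combine this with the fractional Stroock--Varopoulos inequality $\langle (-\Delta)^{\alpha/2}u,u|u|^{r-2}\rangle\ge \frac{4(r-1)}{r^2}\|(-\Delta)^{\alpha/4}|u|^{r/2}\|_2^2$ and the fractional Sobolev embedding $\|(-\Delta)^{\alpha/4}v\|_2^2\ge c_0\|v\|_{2d/(d-\alpha)}^2$ to obtain the $\varepsilon$-uniform estimate
\[
\langle \Lambda_r^\varepsilon u,\,u|u|^{r-2}\rangle \ge c^{(r)}\|u\|_{rd/(d-\alpha)}^r-\omega_r\|u\|_r^r.
\]
For $r=2$ this is the claimed Sobolev embedding for $\Lambda_2$. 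A parallel computation using Kato's inequality for $-\varepsilon\Delta$ and for $(-\Delta)^{\alpha/2}$ gives the uniform $L^1$ quasi-contraction $\|e^{-t\Lambda_r^\varepsilon}f\|_1\le e^{\sigma_2 t}\|f\|_1$. Feeding the $r=2$ differential inequality $\tfrac{1}{2}\partial_t\|u\|_2^2+c\|u\|_{2d/(d-\alpha)}^2\le \omega\|u\|_2^2$ into a Nash argument together with this $L^1$ bound yields $L^1\to L^2$ ultracontractivity uniformly in $\varepsilon$; self-improvement via duality and interpolation then delivers the full $L^r\to L^q$ bound.

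For the limit I would fix $\lambda>\omega_r$ and $f\in C_c^\infty(\mathbb R^d\setminus\{0\})$, and set $u_\varepsilon:=(\lambda+\Lambda_r^\varepsilon)^{-1}f$. The energy estimate gives $\varepsilon$-uniform bounds on $u_\varepsilon$ in $L^r$ and on $(-\Delta)^{\alpha/4}|u_\varepsilon|^{r/2}$ in $L^2$. Because $b_\varepsilon\to b$ locally uniformly on $\mathbb R^d\setminus\{0\}$ and the drift of $\Lambda_r^\varepsilon$ is uniformly bounded on balls disjoint from the origin, standard local fractional regularity off the singularity, combined with Aubin--Lions compactness, extracts a subsequential strong $L^r_{\rm loc}$-limit $u$. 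Testing the resolvent equation against $\varphi\in C_c^\infty(\mathbb R^d\setminus\{0\})$ identifies $u$ as the unique distributional solution of $(\lambda+(-\Delta)^{\alpha/2}-b\cdot\nabla)u=f$ within the available energy class, so the full limit exists; Trotter--Kato then gives the strong $L^r$-convergence of $e^{-t\Lambda_r^\varepsilon}$ locally uniformly in $t$. Positivity preservation, $L^\infty$-contractivity, quasi-contractivity, the $C_0$ property and ultracontractivity are inherited from the approximants, and the $L^1\to L^\infty$ ultracontractivity together with Dunford--Pettis produces the integral-kernel representation.

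The hard part is precisely this passage to the limit: in the critical/supercritical regime $\alpha\le 1$ no form bound of the type $|\langle b\cdot\nabla u,v\rangle|\lesssim \|(-\Delta)^{\alpha/4}u\|_2\|(-\Delta)^{\alpha/4}v\|_2+\|u\|_2\|v\|_2$ is available, so the generator of the limiting semigroup cannot be recovered by perturbation theory. Everything hinges on the non-negativity of ${\rm div\,}b_\varepsilon$ near the origin, which, via integration by parts, converts the singular drift term into a one-sided quantity of the right form and plays the role of the missing form bound.
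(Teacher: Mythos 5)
Your identification of the structural mechanism is right and matches the paper: on $\{|x|<1\}$, $\mathrm{div}\,b_\varepsilon\ge\kappa(d-\alpha)|x|_\varepsilon^{-\alpha}\ge 0$ for $\kappa>0$, and after integration by parts this one-sided sign, together with the Stroock--Varopoulos (Liskevich--Sem\"{e}nov) inequality and a Nash iteration, gives the $\varepsilon$-uniform $L^r$ quasi-contraction, ultracontractivity and Sobolev embedding. This is essentially Claim \ref{claim0} / Appendix \ref{claim0_proof} of the paper.

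However, the passage to the limit $\varepsilon\downarrow 0$ contains a genuine gap. You propose: extract a strong $L^r_{\mathrm{loc}}$-limit of the resolvents $u_\varepsilon=(\lambda+\Lambda_r^\varepsilon)^{-1}f$ via local fractional regularity and Aubin--Lions, test against $\varphi\in C_c^\infty(\mathbb R^d\setminus\{0\})$, and then invoke ``uniqueness of the distributional solution within the available energy class'' to get convergence of the full sequence, followed by Trotter--Kato. The uniqueness claim is not justified and is in fact the crux of the matter: in the supercritical regime $\alpha\le 1$ the energy class coming from your a priori bounds controls only $(-\Delta)^{\alpha/4}|u_\varepsilon|^{r/2}$ in $L^2$, i.e.\,fractional regularity of order $\alpha/2<1/2$, while the drift $b\cdot\nabla$ is (formally) of order $1$ and admits no form bound against $(-\Delta)^{\alpha/2}$ --- you note this yourself. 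Uniqueness of distributional solutions of $(\lambda+(-\Delta)^{\alpha/2}-b\cdot\nabla)u=f$ in that weak class is then essentially equivalent to the well-posedness one is trying to establish, so the argument is circular. (A secondary issue: because $(-\Delta)^{\alpha/2}$ is non-local, ``local regularity off the singularity'' is not automatic, since the operator on a ball away from the origin still sees the behaviour of $u$ near $0$ through its kernel.)

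The paper circumvents this by not using resolvent compactness at all. The key idea you are missing is an $\varepsilon$-uniform \emph{full gradient} estimate (Claim \ref{claim1}): differentiating $\partial_t u^\varepsilon+\Lambda^\varepsilon u^\varepsilon=0$ in $x_i$ and exploiting once more the sign of $\mathrm{div}\,b_\varepsilon$ near the origin --- crucially using $d\ge 3$, $\alpha\le 1$ so that $d-\alpha-2\ge 0$ --- yields $\|\nabla u^\varepsilon(t)\|_2\le e^{\omega_3 t}\|\nabla f\|_2$ for $f\in C_c^\infty$. This first-order bound (much stronger than the $(-\Delta)^{\alpha/4}$ bound) then makes the sequence $\{e^{-t\Lambda^{\varepsilon_n}}f\}$ provably Cauchy in $L^\infty([0,1],L^2)$: when one subtracts the equations for $u_n$ and $u_m$ and multiplies by $g=u_n-u_m$, the cross term $\langle (b_n-b_m)\cdot\nabla u_m,g\rangle$ is controlled by $\|\mathbf{1}_{|x|<2}(b_n-b_m)\|_2\,\|\nabla u_m\|_2\,\|g\|_\infty$, which vanishes precisely because of the gradient bound. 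No uniqueness theorem for the limit equation is needed --- the limiting semigroup is constructed directly, and the Sobolev embedding for $\Lambda_2$ is then obtained by passing to the limit in the resolvent inequality, not by a uniform estimate on $\Lambda_r^\varepsilon$ alone. This gradient estimate, with its reliance on $d-\alpha-2\ge 0$, is the step your sketch would need to supply.
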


By construction, the integral kernel  $e^{-t\Lambda}(x,y)$ of $e^{-t\Lambda_r}$ does not depend on $r$.
It is defined to be the heat kernel of $(-\Delta)^{\frac{\alpha}{2}} - b \cdot \nabla$. One can easily see that $u(t):=e^{-t\Lambda_2}f$ with $f \in L^2$ is a weak solution to \eqref{eq}.

\subsection*{Notations}
We write
$$
\langle u,v\rangle = \langle u\bar{v}\rangle :=\int_{\mathbb R^d}u\bar{v}dx.
$$

The fractional Laplacian $(-\Delta)^{\frac{\alpha}{2}}$ is defined in $L^r$, $r \in [1,\infty[$ or $C_u$  (bounded uniformly continuous functions with the $\sup$-norm) in the sense of Balakrishnan. (Here $-\Delta$ is defined in $L^r$ or $C_u$ as the generator of the heat semigroup in these spaces.)

We denote by $\mathcal B(X,Y)$ the space of bounded linear operators between Banach spaces $X \rightarrow Y$, endowed with the operator norm $\|\cdot\|_{X \rightarrow Y}$. Set  $\mathcal B(X):=\mathcal B(X,X)$.

We write $T=s\mbox{-} X \mbox{-}\lim_n T_n$ for $T$, $T_n \in \mathcal B(X)$ if $Tf=\lim_n T_nf$ in $X$ for every $f \in X$. We also write $T_n \overset{s}{\rightarrow} T$ if $X=L^2$.

Denote $\|\cdot\|_{p \rightarrow q}:=\|\cdot\|_{L^p \rightarrow L^q}$.

We say that a constant is generic if it only depends on $d$, $\kappa$, $\alpha$, $\sigma_1$, $\sigma_2$.

\bigskip

\section{Proof of Proposition \ref{prop1}}

\label{sect_d}

The proof  is contained in the next three claims. 

\begin{claim}
\label{claim0}
For every $r \in [1,\infty[$ and all $\varepsilon>0$, 
$$
\|e^{-t\Lambda_r^{\varepsilon}}\|_{r \rightarrow r} \leq e^{\omega_r t}.
$$
There exists constant $c_N$  independent of $\varepsilon$ such that, for all $1 \leq r < q \leq \infty$,
$$
\|e^{-t\Lambda^\varepsilon_r}\|_{r \rightarrow q} \leq c_N  e^{\omega_r t} t^{-\frac{d}{\alpha}(\frac{1}{r}-\frac{1}{q})}, \quad t>0.
$$
There exists constant  $c_S$  independent of $\varepsilon$ such that
$$
\langle \Lambda_2^\varepsilon u,u \rangle \geq c_S \|u\|_{\frac{2d}{d-\alpha}}^2, \quad u \in D(\Lambda_2^\varepsilon)=\mathcal W^{2,2}.
$$
\end{claim}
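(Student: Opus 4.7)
The plan is to derive all three statements from the single energy identity obtained by testing the evolution equation $\partial_t u+\Lambda_r^\varepsilon u=0$, $u(t)=e^{-t\Lambda_r^\varepsilon}f$, against $u|u|^{r-2}$. The decisive observation, which forces every constant to be independent of $\varepsilon$, is that on $\{|x|<1\}$
\[
{\rm div\,}b_\varepsilon(x)=\kappa|x|_\varepsilon^{-\alpha}\Bigl(d-\alpha\frac{|x|^2}{|x|_\varepsilon^2}\Bigr)\geq \kappa(d-\alpha)|x|_\varepsilon^{-\alpha}\geq 0,
\]
while on $\{|x|\geq 1\}$ the hypotheses on $b$ give $|{\rm div\,}b_\varepsilon|\leq \sigma_2$ uniformly in $\varepsilon$. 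Although ${\rm div\,}b_\varepsilon$ blows up as $\varepsilon\downarrow 0$, for $\kappa>0$ it does so with the ``good'' sign and can simply be discarded in each $L^r$ energy inequality.

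Integration by parts in the drift term produces the identity
\[
\tfrac{1}{r}\partial_t\|u\|_r^r+\varepsilon(r-1)\langle|\nabla u|^2|u|^{r-2}\rangle+\langle(-\Delta)^{\frac{\alpha}{2}}u,u|u|^{r-2}\rangle+\tfrac{1}{r}\langle{\rm div\,}b_\varepsilon,|u|^r\rangle=0.
\]
The first three terms are nonnegative (for the fractional one this is the Stroock-Varopoulos inequality), and the divergence term is $\geq -\sigma_2\|u\|_r^r/r$ once its nonnegative $\{|x|<1\}$ piece is thrown away; this yields $\partial_t\|u\|_r^r\leq \sigma_2\|u\|_r^r$, i.e. the $L^r$ quasi-contraction with $\omega_r=\sigma_2/r$. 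For the Sobolev bound (item~3) I would take $r=2$, retain the fractional term, invoke the embedding $\langle(-\Delta)^{\frac{\alpha}{2}}u,u\rangle\geq c\|u\|_{2d/(d-\alpha)}^2$, and absorb the $-\sigma_2\|u\|_2^2/2$ contribution into the $\omega_2$-shift implicit in the quasi-accretive framework.

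For the ultracontractivity I would strengthen the identity by applying Stroock-Varopoulos together with the fractional Sobolev embedding to $|u|^{r/2}$, obtaining, for $r\geq 2$,
\[
\partial_t\|u\|_r^r+rc_r\|u\|_{rd/(d-\alpha)}^r\leq \sigma_2\|u\|_r^r,
\]
and then run the standard Moser iteration along the scale $r_k=2\bigl(d/(d-\alpha)\bigr)^k$ to produce $L^2\to L^q$ bounds of the claimed form; the full range $1\leq r<q\leq\infty$ is recovered by interpolation with the $L^\infty$ contraction and the $L^1$ quasi-contraction (equivalently, by duality). The principal obstacle is making the whole scheme genuinely uniform in $\varepsilon$: the factor $|x|_\varepsilon^{-\alpha}$ is unbounded as $\varepsilon\downarrow 0$, and the argument hinges on never having to integrate it against $|u|^r$, which the favourable sign of ${\rm div\,}b_\varepsilon$ on $\{|x|<1\}$ (forced by $\kappa>0$) precisely arranges.
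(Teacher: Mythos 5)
Your proposal is correct and follows essentially the same route as the paper's proof: you test against $\bar u|u|^{r-2}$, invoke the Stroock--Varopoulos inequality (the paper cites Liskevich--Sem\"{e}nov) for the fractional term, exploit the favourable sign $\mathrm{div}\,b_\varepsilon\geq\kappa(d-\alpha)|x|_\varepsilon^{-\alpha}>0$ on $\{|x|<1\}$ (plus the uniform bound $\sigma_2$ on $\{|x|\geq1\}$) to discard the singular part, and then iterate for ultracontractivity. The paper runs the iteration via Nash's inequality applied to $u^{r/2}$ together with Coulhon--Raynaud extrapolation or the reproduction property, whereas you propose the equivalent Moser scheme with the fractional Sobolev embedding and finish by interpolation/duality; both yield the same $\varepsilon$-uniform bounds.
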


Although the proof of Claim \ref{claim0} is standard, we included it in Appendix \ref{claim0_proof} for the sake of completeness.

\smallskip

To prove that $s\mbox{-}L^r\mbox{-}\lim_{\varepsilon \downarrow 0}e^{-t\Lambda_r^{\varepsilon}}$ exists and determines a $C_0$ semigroup,  we will show that $\{e^{-t\Lambda_r^{\varepsilon_n}}f\} $ is a Cauchy sequence in $L^\infty([0,1],L^r)$, for any $f \in C_c^\infty$ and any $\{\varepsilon_n\} \downarrow 0$,. 
For that, we will need a uniform bound on the $L^2$ norm of the gradient of $u^\varepsilon(t):=e^{-t\Lambda^\varepsilon}f$.

\begin{claim}
\label{claim1}
There exists a constant $\omega_3$ independent of $\varepsilon$ such that
$$\|\nabla u^\varepsilon(t)\|_2 \leq e^{t\omega_3}\|\nabla f\|_2, \quad t \geq 0.$$
\end{claim}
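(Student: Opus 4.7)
Since $b_\varepsilon$ is smooth and bounded and $f \in C_c^\infty$, the orbit $u^\varepsilon(t,\cdot)$ is smooth in $x$ with rapid decay, so the formal manipulations below are rigorous. My plan is to derive an energy identity for $v := \nabla u^\varepsilon$ by differentiating the evolution equation in each $x_i$, pairing against $v^i$ in $L^2$, summing on $i$, and then absorbing the resulting singular drift contribution into the fractional Hardy form of $(-\Delta)^{\alpha/2}$.

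Since $\nabla_i$ commutes with $-\varepsilon\Delta+(-\Delta)^{\alpha/2}$ on Schwartz-type functions, $v^i:=\nabla_i u^\varepsilon$ satisfies
\begin{equation*}
\partial_t v^i + \bigl(-\varepsilon\Delta + (-\Delta)^{\alpha/2}\bigr)v^i - b_\varepsilon\cdot\nabla v^i \;=\; (\nabla_i b_\varepsilon^j)\,v^j.
\end{equation*}
Testing against $v^i$, summing over $i$, and using the integration-by-parts identity $\sum_i\int (b_\varepsilon\cdot\nabla v^i)v^i\,dx = -\tfrac12\int(\mathrm{div}\,b_\varepsilon)|v|^2\,dx$, I obtain
\begin{equation*}
\tfrac12\tfrac{d}{dt}\|v\|_2^2 + \varepsilon\|\nabla v\|_2^2 + \langle (-\Delta)^{\alpha/2}v,v\rangle \;=\; -\tfrac12\int(\mathrm{div}\,b_\varepsilon)|v|^2\,dx + \sum_{i,j}\int (\nabla_i b_\varepsilon^j)\,v^i v^j\,dx.
\end{equation*}

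Next, I bound the right-hand side pointwise, uniformly in $\varepsilon$. On $\{|x|<1\}$, a direct calculation yields $\mathrm{div}\,b_\varepsilon = \kappa|x|_\varepsilon^{-\alpha-2}\bigl((d-\alpha)|x|^2+d\varepsilon\bigr)\geq 0$, so the first term is nonpositive and may be discarded. The symmetric matrix $\nabla_i b_\varepsilon^j = \kappa|x|_\varepsilon^{-\alpha}\delta_{ij} - \kappa\alpha|x|_\varepsilon^{-\alpha-2}x^ix^j$ has largest eigenvalue $\kappa|x|_\varepsilon^{-\alpha}$ (the remaining eigenvalue $\kappa(1-\alpha)|x|^2|x|_\varepsilon^{-\alpha-2}+O(\varepsilon)$ is smaller for $\alpha\leq 1$), whence $\sum_{i,j}(\nabla_i b_\varepsilon^j)v^i v^j\leq \kappa|x|^{-\alpha}|v|^2$, since $|x|_\varepsilon\geq|x|$. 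On $\{|x|\geq 1\}$, the standing hypotheses give $|\mathrm{div}\,b_\varepsilon|\leq\sigma_2$ and $|\nabla b_\varepsilon|\leq \sigma_1$ uniformly in $\varepsilon$. Combining,
\begin{equation*}
\tfrac12\tfrac{d}{dt}\|v\|_2^2 + \langle (-\Delta)^{\alpha/2}v,v\rangle \;\leq\; \kappa\int_{\mathbb R^d}|x|^{-\alpha}|v|^2\,dx + C\|v\|_2^2,
\end{equation*}
with $C$ depending only on $\sigma_1,\sigma_2,d,\alpha$.

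Finally, I apply the $L^2$ fractional Hardy inequality $\langle(-\Delta)^{\alpha/2}v,v\rangle \geq c_H\int|x|^{-\alpha}|v|^2\,dx$ with the sharp (Herbst) constant $c_H = 2^\alpha\Gamma^2\bigl(\tfrac{d+\alpha}{4}\bigr)/\Gamma^2\bigl(\tfrac{d-\alpha}{4}\bigr)$, applied componentwise. Under the smallness condition $\kappa<c_H$, which is consistent with the theorem's hypothesis that \eqref{beta_eq} has a solution $\beta\in(0,\alpha)$, the singular integral is absorbed into the left-hand side, leaving $\tfrac{d}{dt}\|v\|_2^2\leq 2\omega_3\|v\|_2^2$ for an $\varepsilon$-independent $\omega_3$; Gronwall then gives $\|\nabla u^\varepsilon(t)\|_2\leq e^{t\omega_3}\|\nabla f\|_2$. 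The main obstacle is exactly this absorption: the bound $\kappa|x|^{-\alpha}|v|^2$ is sharp in the tangential direction of $v$, so to treat values of $\kappa$ close to or above the Hardy threshold one would have to retain the discarded negative contribution $-\kappa\alpha|x|_\varepsilon^{-\alpha-2}(x\cdot v)^2$ and perform a more delicate radial–tangential splitting.
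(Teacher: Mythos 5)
The energy identity you derive, and the pointwise estimates of $\operatorname{div}b_\varepsilon$ and the matrix $\nabla_i b_\varepsilon^j$, are all correct and match the paper's setup. But there is a genuine gap where you decide that the divergence term is ``nonpositive and may be discarded.'' That term is precisely what makes the argument work for every $\kappa>0$. Keeping it, one has on $\{|x|<1\}$
\[
-\tfrac12\operatorname{div}b_\varepsilon\,|v|^2+\sum_{i,j}(\nabla_i b_\varepsilon^j)v^iv^j
\;\leq\;
-\tfrac{\kappa(d-\alpha)}{2}\,|x|_\varepsilon^{-\alpha}|v|^2+\kappa\,|x|_\varepsilon^{-\alpha}|v|^2
\;=\;
-\tfrac{\kappa(d-\alpha-2)}{2}\,|x|_\varepsilon^{-\alpha}|v|^2\;\leq\;0,
\]
since $d\geq3$ and $\alpha\leq1$ give $d-\alpha-2\geq0$. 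No fractional Hardy inequality is needed, and no restriction on $\kappa$ appears. By discarding the divergence term you are forced to absorb $\kappa\int|x|^{-\alpha}|v|^2$ into $\langle(-\Delta)^{\alpha/2}v,v\rangle$, which only works under $\kappa<c_H$. That smallness condition is \emph{not} implied by the theorem's hypotheses: the left-hand side of \eqref{beta_eq} increases from $0$ to $+\infty$ as $\beta$ runs over $]0,\alpha[$ (because $\gamma(d+\beta-\alpha)\to\gamma(d)=0$ as $\beta\uparrow\alpha$), so a solution $\beta\in\,]0,\alpha[$ exists for \emph{every} $\kappa>0$. Your proof therefore establishes the claim only on a strict subinterval of admissible $\kappa$.

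Your closing suggestion, to recover larger $\kappa$ by retaining the negative radial contribution $-\kappa\alpha|x|_\varepsilon^{-\alpha-2}(x\cdot v)^2$, does not close the gap either: that term vanishes when $v\perp x$, so it offers no help in exactly the tangential regime you correctly identify as critical. The missing ingredient is not a radial--tangential splitting of $\nabla b_\varepsilon$ but the divergence term itself, which you threw away.
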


\begin{proof}[Proof of Claim \ref{claim1}]
Denote
$
u \equiv u^\varepsilon$, $w:=\nabla u$, $w_i:=\nabla_i u.
$
Since $f\in C^\infty_c$ and $\nabla^n_i b^i_\varepsilon\in C^\infty$ ($n=0,1$) are bounded and continuous, we can differentiate the equation $\partial_t u + \Lambda^\varepsilon u=0$ in $x_i$, obtaining
$$
\partial_t w_i -\varepsilon \Delta w_i  + (-\Delta)^{\frac{\alpha}{2}}w_i - b_\varepsilon \cdot \nabla w_i - (\nabla_i b_\varepsilon) \cdot w=0.
$$
Multiplying the latter by $\bar{w_i}$, integrating by parts and summing up in $i=1,\dots,d$, we obtain
$$
\frac{1}{2} \partial_t \|w\|_2^2 + \varepsilon \sum_{i=1}^d\|\nabla w_i\|_2^2 + \sum_{i=1}^d \|(-\Delta)^{\frac{\alpha}{4}}w_i\|_2^2 - \Real\sum_{i=1}^d\langle  b_\varepsilon \cdot \nabla w_i,w_i \rangle - \Real\sum_{i=1}^d \langle (\nabla_i b_\varepsilon) \cdot w,w_i \rangle =0.
$$
Here, using the integration by parts, we obtain
\begin{align*}
& -\Real\langle b_\varepsilon \cdot \nabla w_i,w_i\rangle  =  \frac{1}{2}\langle ({\rm div\,}b_\varepsilon) w_i,w_i\rangle  \\
& \geq \frac{\kappa}{2}\langle \mathbf{1}_{|x| < 1}(d|x|_\varepsilon^{-\alpha}-\alpha |x|_\varepsilon^{-\alpha-2}|x|^2)w_i,w_i\rangle - \frac{\sigma_2}{2}\langle w_i,w_i\rangle.
\end{align*}
Also, 
\begin{align*}
-\langle (\nabla_i b_\varepsilon) \cdot w,w_i \rangle & \geq -\kappa \langle \mathbf{1}_{|x|<1}|x|_\varepsilon^{-\alpha}w_i,w_i\rangle +\kappa \alpha \langle \mathbf{1}_{|x|<1}|x|_\varepsilon^{-\alpha-2} x_i \bar{w}_i (x \cdot w) \rangle  - \sigma_1 \langle \mathbf{1}_{|x| \geq 1} |w|^2\rangle,
\end{align*}
and so 
$$
- \Real\sum_{i=1}^d \langle (\nabla_i b_\varepsilon) \cdot w,w_i \rangle \geq -\kappa \langle \mathbf{1}_{|x|<1}|x|_\varepsilon^{-\alpha}|w|^2\rangle - \sigma_1  d\langle  |w|^2\rangle.
$$

Thus, 
\begin{align*}
 & \frac{1}{2} \partial_t \|w\|_2^2  +  \varepsilon \sum_{i=1}^d\|\nabla w_i\|_2^2 + \sum_{i=1}^d \|(-\Delta)^{\frac{\alpha}{4}}w_i\|_2^2 \\
& + \kappa \frac{ d-\alpha-2  }{2}\langle \mathbf{1}_{|x|<1}|x|_\varepsilon^{-\alpha}|w|^2\rangle +\frac{\kappa\alpha\varepsilon}{2}\langle  \mathbf{1}_{|x|<1} |x|_\varepsilon^{-\alpha-2}|w|^2\rangle - \bigl(\sigma_1 d+\frac{\sigma_2}{2} \bigr) \|w\|_2^2  \leq 0,
\end{align*}
and so, since $\kappa>0$,
\[
\frac{1}{2} \partial_t \|w\|_2^2 + \kappa \frac{ d-\alpha -2 }{2}\langle \mathbf{1}_{|x|<1}|x|_\varepsilon^{-\alpha}|w|^2\rangle  \leq \bigl(\sigma_1 d+\frac{\sigma_2}{2} \bigr) \|w\|_2^2.
\]
Since $d \geq 3$, $\alpha \leq 1$, we have $ d-\alpha-2 \geq 0$. Thus, integrating in $t$, we obtain
$$
\|w(t)\|_2^2 \leq e^{t\omega_3}\|\nabla f\|_2^2, \quad t \geq 0, \quad \omega_3:=\sigma_1 d+\frac{\sigma_2}{2}.
$$
\end{proof}

Next, set $u_n:=u^{\varepsilon_n}$, $b_n:=b_{\varepsilon_n}$, where $\varepsilon_n \downarrow 0$, and put $$g(t):=u_n(t) - u_m(t), \quad t\geq 0.$$

\begin{claim}
\label{claim2}
$\|g(t)\|_2 \rightarrow 0$ uniformly in $t \in [0,1]$ as $n,m \rightarrow \infty$.
\end{claim}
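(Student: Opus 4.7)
The approach will be an energy estimate in $L^2$ closed by Gronwall. Subtracting the equations for $u_n$ and $u_m$ gives
$$
\partial_t g + \Lambda^{\varepsilon_n} g = (\varepsilon_n - \varepsilon_m)\Delta u_m + (b_n - b_m) \cdot \nabla u_m, \qquad g(0) = 0.
$$
Pairing with $g$ (which is real since $f$ is), integrating by parts, and rewriting $-\langle b_n \cdot \nabla g, g\rangle = \tfrac{1}{2}\langle {\rm div\,} b_n, g^2\rangle$, I will repeat the Hardy-type computation from the proof of Claim~1: on $\{|x|<1\}$ one has ${\rm div\,} b_n = \kappa|x|_{\varepsilon_n}^{-\alpha}(d - \alpha|x|^2/|x|_{\varepsilon_n}^2) \geq 0$ (using $d\geq 3 > \alpha$), while $|{\rm div\,}b_n|\leq \sigma_2$ on $\{|x|\geq 1\}$. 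Dropping the nonnegative $\varepsilon_n\|\nabla g\|_2^2$ and $\|(-\Delta)^{\alpha/4}g\|_2^2$ terms leaves
$$
\tfrac{1}{2}\partial_t \|g\|_2^2 \leq \tfrac{\sigma_2}{2}\|g\|_2^2 + |(\varepsilon_n - \varepsilon_m)\langle \Delta u_m, g\rangle| + |\langle (b_n - b_m)\cdot\nabla u_m, g\rangle|.
$$

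The $\varepsilon$-source is controlled by another integration by parts, $\langle \Delta u_m, g\rangle = -\langle \nabla u_m, \nabla g\rangle$, together with the uniform gradient bound of Claim~1; this makes the whole term $O(\varepsilon_n + \varepsilon_m)$ on $[0,1]$. The drift-difference source is the delicate one and the main obstacle: at the critical exponent $\alpha = 1$ the sequence $b_n$ does \emph{not} converge uniformly to $b$, so one cannot simply pull an $L^\infty$-norm off $b_n - b_m$. The idea is to pull an $L^\infty$-norm off the other factor instead. The $L^\infty$-contraction of $e^{-t\Lambda^{\varepsilon}_r}$ gives $\|g\|_\infty \leq 2\|f\|_\infty$, and H\"older then yields
$$|\langle (b_n - b_m)\cdot \nabla u_m, g\rangle| \leq 2\|f\|_\infty\, \|b_n - b_m\|_2\, \|\nabla u_m\|_2,$$
with $\|\nabla u_m\|_2$ again bounded by Claim~1 and $\|b_n - b_m\|_2 \to 0$ by dominated convergence: the difference is supported essentially in $\{|x|\leq 2\}$, vanishes pointwise off the origin, and is dominated by $2\kappa|x|^{1-\alpha}\mathbf{1}_{\{|x|\leq 1\}}$ on the singular zone plus a piece on $\{1\leq |x|\leq 2\}$ that is uniformly small by the hypothesis on $b_\varepsilon$.

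Putting these bounds together yields a differential inequality $\partial_t \|g\|_2^2 \leq \sigma_2\|g\|_2^2 + \eta_{n,m}(t)$ on $[0,1]$ with $\sup_{t\in[0,1]}\eta_{n,m}(t) \to 0$, and Gronwall with $g(0)=0$ closes the argument, giving $\|g(t)\|_2 \to 0$ uniformly on $[0,1]$. The only nontrivial ingredient is the drift-difference estimate in the critical case $\alpha = 1$, where one trades the unavailable uniform convergence $b_n \to b$ for $L^2$-convergence at the cost of transferring the $L^\infty$-norm onto $g$ via the contraction property of the approximating semigroups.
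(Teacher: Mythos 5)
Your proof is correct and follows essentially the same route as the paper: subtract the two equations, take the $L^2$ energy pairing, use ${\rm div}\,b_n\geq -\sigma_2$ to drop the drift term to a quasi-contraction bound, and control the two source terms by trading the factor $b_n-b_m$ against $\|\nabla u_m\|_2$ (Claim 1) and $\|g\|_\infty\leq 2\|f\|_\infty$ ($L^\infty$ contraction), plus dominated convergence of $b_n\to b$ in $L^2$ on $\{|x|<2\}$. The only cosmetic difference is the closing step: you invoke Gronwall directly on $[0,1]$, whereas the paper integrates, absorbs $\sigma_2\int_0^\tau\|g\|_2^2$ into the supremum by choosing $\sigma_2\tau<1$, and then extends from $[0,\tau]$ to $[0,1]$ via the reproduction property --- your version is a touch cleaner but not a different idea.
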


\begin{proof}[Proof of Claim \ref{claim2}]
We subtract the equations for $u_n$ and $u_m$ and obtain
$$
\partial_t g  - \varepsilon_{n} \Delta g - (\varepsilon_n-\varepsilon_m) \Delta u_m  + (-\Delta)^{\frac{\alpha}{2}}g - b_n \cdot \nabla g - (b_n-b_m) \cdot \nabla u_m=0,
$$
so, after multiplying by $g$ and integrating, we have
\begin{align}
\label{id_g}
\frac{1}{2}\partial_t \|g\|_2^2 & + \varepsilon_n \|\nabla g\|_2^2 + (\varepsilon_n-\varepsilon_m) \langle \nabla u_m,\nabla g \rangle \notag \\ 
& + \|(-\Delta)^{\frac{\alpha}{4}}g\|_2^2 
 - \Real\langle b_n \cdot \nabla g, g\rangle - \Real \langle (b_n-b_m) \cdot \nabla u_m,g\rangle=0.
\end{align}
Concerning the last two terms, we have (uniformly in $t \in [0,1]$):
$$
-\Real \langle b_n \cdot \nabla g, g\rangle \geq - \frac{\sigma_2}{2}\|g\|_2^2
$$
(arguing as in the proof of Claim \ref{claim1}), and
\begin{align*}
|\langle (b_n-b_m) \cdot \nabla u_m,g\rangle| & = |\langle \mathbf{1}_{|x|<2}(b_n-b_m) \cdot \nabla u_m,g\rangle|\\
& \text{(we use $\|g\|_\infty \leq 2\|f\|_\infty$)} \\
& \leq \|\mathbf{1}_{|x|<2} (b_n-b_m)\|_2 \|\nabla u_m\|_2 2 \|f\|_\infty \\
& (\text{we are using Claim \ref{claim1}}) \\
& \leq 2e^{\omega_3}\|\mathbf{1}_{|x|<2} (b_n-b_m)\|_2 \|\nabla f\|_2 \|f\|_\infty  \rightarrow 0 \quad \text{ as $n,m \rightarrow \infty$},
\end{align*}
Using again Claim \ref{claim1}, we have
$$
|(\varepsilon_n-\varepsilon_m) \langle \nabla u_m,\nabla g \rangle| \leq |\varepsilon_n-\varepsilon_m| \|\nabla u_m\|_2\|\nabla g\|_2 \rightarrow 0 \qquad \text{as $n,m \rightarrow \infty$}.
$$
Thus, integrating \eqref{id_g} in $t$ and using the last three observations, we have for all $0<\tau \leq 1$
$$
\sup_{t \in [0,\tau]}\|g(t)\|_2^2 - \sigma_2\int_0^\tau \|g(s)\|_2^2 ds \leq o(\varepsilon),
$$
where $o(\varepsilon) \rightarrow 0$ as $\varepsilon \downarrow 0$. It follows that
$$
(1-\sigma_2 \tau)\sup_{t \in [0,\tau]}\|g(t)\|_2^2 \leq o(\varepsilon),
$$
where $\tau>0$ is fixed so that $\sigma_2\tau<1$.
This yields the required convergence on $[0,\tau]$. Now, the latter and the reproduction property of the approximating semigroups end the proof of Claim \ref{claim2}.
\end{proof}

By Claim \ref{claim2}, $\{e^{-t\Lambda^{\varepsilon_n} }f\}_{n=1}^\infty$, $f \in C_c^\infty$ is a Cauchy sequence in $L^\infty([0,1],L^2)$. Set
\begin{equation*}
T_2^t f:=s\mbox{-}L^2\mbox{-}\lim_n e^{-t\Lambda^{\varepsilon_n}}f \text{ uniformly in }0 \leq t \leq 1.
\end{equation*}
(Clearly, the limit does not depend on the choice of $\{\varepsilon_n\}\downarrow 0$.) Extending $T_2^t$ by continuity to $L^2$, and then to all $t>0$ by postulating the reproduction property, we obtain a $C_0$ semigroup on $L^2$. Put $e^{-t\Lambda_2}:=T_2^t$, $t \geq 0$.
Now Claim \ref{claim0} and the standard density argument yields convergence in all $L_r$, $1 \leq r<\infty$. 
The ultracontractivity property follows. The fact that the resulting semigroups are integral operators is an immediate consequence of the ultracontractivity and the Dunford-Pettis Theorem.

It remains to prove the Sobolev embedding property. By Claim \ref{claim0}  ($\Lambda^\varepsilon \equiv \Lambda^\varepsilon_2$), 
$$
\Real\big\langle \Lambda^\varepsilon(1+\Lambda^\varepsilon)^{-1}g,(1+\Lambda^\varepsilon)^{-1}g\big\rangle \geq c_S \|(1+\Lambda^\varepsilon)^{-1}g\|_{\frac{2d}{d-\alpha}}^2, \quad g \in L^2, \quad c_S \neq c_S(\varepsilon),
$$
i.e.
$$
\Real\big\langle g-(1+\Lambda^\varepsilon)^{-1}g,(1+\Lambda^\varepsilon)^{-1}g\big\rangle \geq c_S \|(1+\Lambda^\varepsilon)^{-1}g\|_{\frac{2d}{d-\alpha}}^2.
$$
Using the convergence $(1+\Lambda^\varepsilon)^{-1} \overset{s}{\rightarrow } (1+\Lambda)^{-1}$ in $L^2$ as $\varepsilon \downarrow 0$, we obtain $\Real\big\langle \Lambda(1+\Lambda)^{-1}g,(1+\Lambda)^{-1}g\big\rangle \geq c_S \|(1+\Lambda)^{-1}g\|_{\frac{2d}{d-\alpha}}^2$ for all $g \in L^2$, and so the Sobolev embedding follows.

\bigskip

\section{Proof of Theorem \ref{thm}}

\subsection{Desingularization theorem}

We first state an abstract desingularization theorem from \cite{KS}. We will apply it in the next section to the operator $(-\Delta)^{\frac{\alpha}{2}} - b \cdot \nabla$.

Let $X$ be a locally compact topological space, and $\mu$ a $\sigma$-finite Borel measure on $X$. Set $L^p=L^p(X,\mu)$, $p \in [1,\infty]$, a (complex) Banach space. Let $\|\cdot\|_{p \rightarrow q}:=\|\cdot\|_{L^p \rightarrow L^q}$.
Let $-\Lambda$ be the generator of a contraction $C_0$ semigroup $e^{-t\Lambda}$, $t>0$, in $L^2$.

Assume that, for some constants $M\geq 1$, $c_S>0$, $j>1$, $c>0$,
\[
\|e^{-t\Lambda}f\|_1\leq M\|f\|_1 , \quad t\geq 0, \quad f\in L^1\cap L^2 .\tag{$B_{11}$}
\]
\[
\text{Sobolev embedding property:} \quad \Real\langle\Lambda u,u\rangle\geq c_S\|u\|^2_{2j}, \quad u\in D(\Lambda).\tag{$B_{12}$}
\]
\[
\|e^{-t\Lambda}\|_{2 \rightarrow \infty} \leq ct^{-\frac{j'}{2}}, \quad t>0, \quad j'=\frac{j}{j-1}.\tag{$B_{13}$}
\]

\smallskip

Assume also that there exists a family of real-valued weights $\psi=\{\psi_s\}_{s>0}$ on $X$ such that, for all $s>0$, 
\[
0 \leq \psi_s, \psi_s^{-1} \in L^1_{\loc}(X - N,\mu), \quad \text{where } N \text{ is a closed null set},\tag{$B_{21}$}
\]
and there exist constants $\theta \in ]0,1[$, $\theta \neq \theta (s)$, $c_i\neq c_i(s)$ {\rm($i=2,3$)} and a measurable set $\Omega^s \subset X$ such that
\[
\psi_s(x)^{-\theta } \leq c_2 \text{ for all } x\in X-\Omega^s, \tag{$B_{22}$}
\]
\[
\|\psi_s^{-\theta }\|_{L^{q^\prime}(\Omega^s)}\leq c_3 s^{j^\prime /q^\prime},\text{ where } q^\prime=\frac{2}{1-\theta}.\tag{$B_{23}$}
\]

\begin{theorem}[{\cite[Theorem 1]{KS}}]
\label{thmB}
In addition to $(B_{11})-(B_{23})$ assume that there exists a constant $c_1\neq c_1(s)$ such that, for any $s>0$ and all $\frac{s}{2}\leq t \leq s$,
\[
\|\psi_s e^{-t\Lambda}\psi_s^{-1}f\|_{1} \leq c_1\|f\|_{1}, \quad f \in L^1.
\tag{$B_3$}
\]
Then there is a constant $C$ such that, for all $t>0$ and $\mu$ a.e. $x,y \in X$,
\[
|e^{-t\Lambda}(x,y)|\leq C t^{-j^\prime}\psi_t(y).
\]  
\end{theorem}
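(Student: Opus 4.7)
The plan is to convert the pointwise kernel estimate into an $L^1(\mu)\to L^\infty(\mu)$ operator norm bound via the Dunford--Pettis theorem and then prove that bound by splitting the semigroup in time, applying $B_3$ on one piece and combining the unweighted ultracontractivity with the weight integrability on the other. Since the integral kernel of $e^{-s\Lambda}\psi_s^{-1}$ is precisely $e^{-s\Lambda}(x,y)\psi_s(y)^{-1}$, the conclusion is equivalent to
\[
\|e^{-s\Lambda}\psi_s^{-1}\|_{L^1(\mu)\to L^\infty(\mu)}\le C s^{-j'},\quad s>0,
\]
with $C$ independent of $s$. I would fix $s$ and prove this bound.

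The first substantive step is a time split at $s/2$. Writing $e^{-s\Lambda}=e^{-s/2\Lambda}e^{-s/2\Lambda}$ and inserting $\psi_s\psi_s^{-1}$ between the two factors gives the key identity
\[
e^{-s\Lambda}\psi_s^{-1}=e^{-s/2\Lambda}\psi_s^{-1}\bigl[\psi_s e^{-s/2\Lambda}\psi_s^{-1}\bigr].
\]
Since $s/2\in[s/2,s]$, condition $B_3$ yields that the bracketed operator is bounded on $L^1(\mu)$ with norm at most $c_1$. Hence it is enough to establish the residual bound $\|e^{-s/2\Lambda}\psi_s^{-1}\|_{L^1\to L^\infty}\le C s^{-j'}$.

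For this residual estimate I would combine the ultracontractivity obtained by Riesz--Thorin interpolation of $B_{11}$ and $B_{13}$, namely $\|e^{-s/2\Lambda}\|_{L^q\to L^\infty}\le C s^{-j'/q}$ for appropriate $q\in(1,2)$, with an $L^1\to L^q$ bound on multiplication by $\psi_s^{-1}$. The $L^q$ norm of $\psi_s^{-1}g$ is controlled by splitting the integration over $X\setminus\Omega^s$ and $\Omega^s$: on the former, $B_{22}$ gives $\psi_s^{-1}\le c_2^{1/\theta}$ pointwise, so that part is dominated by $c_2^{1/\theta}\|g\|_q$; on $\Omega^s$, a H\"older decomposition of $\psi_s^{-1}$ together with $B_{23}$ produces the factor $\|\psi_s^{-\theta}\|_{L^{q'}(\Omega^s)}\le c_3 s^{j'/q'}$, which supplies the needed positive power of $s$. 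The choice $q'=2/(1-\theta)$ in $B_{23}$ is exactly what makes the ultracontractivity exponent $s^{-j'/q}$ and the weight exponent $s^{j'/q'}$ combine, after composing with the $L^1$-bounded bracketed factor, into the target $Cs^{-j'}$.

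The main obstacle I anticipate is that the H\"older step inside $\Omega^s$ does not close in a single pass: separating off $\psi_s^{-\theta}$ leaves a residual factor of the form $\psi_s^{-(1-\theta)}g$ which is still weighted, so an $L^1\to L^q$ bound on $\psi_s^{-1}$ from $L^1$ alone is not immediate. Resolving this likely requires either a Moser-type bootstrap that uses the Sobolev embedding $B_{12}$ to raise $L^p$ bounds on $e^{-t\Lambda}\psi_s^{-1}$ step by step and pass to $L^\infty$ in the limit, or a finer three-piece time decomposition in which $B_3$ is invoked twice, pushing $\psi_s$ through two separate portions of the evolution while the middle piece is handled by ultracontractivity alone. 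In either variant, the delicate bookkeeping is to verify that the intermediate constants remain $s$-independent and the final exponent of $s$ is exactly $-j'$.
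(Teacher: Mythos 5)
Note first that the paper does not prove Theorem~\ref{thmB}: it is imported verbatim from \cite{KS} (Theorem~1 there), so there is no in-paper argument to match your proposal against. Judged on its own merits, however, your proposal is not yet a proof; it stops exactly at the point where the real work begins, and you say so yourself.

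Your preliminary steps are sound. Reformulating the kernel bound as $\|e^{-s\Lambda}\psi_s^{-1}\|_{L^1\to L^\infty}\le Cs^{-j'}$ is the right move, condition $(B_3)$ does make $\psi_s e^{-(s/2)\Lambda}\psi_s^{-1}$ bounded on $L^1$, and Riesz--Thorin between the $L^1\to L^\infty$ and $L^2\to L^\infty$ bounds supplied by $(B_{11})$--$(B_{13})$ does give $\|e^{-t\Lambda}\|_{q\to\infty}\lesssim t^{-j'/q}$. But the time-split $e^{-s\Lambda}\psi_s^{-1}=e^{-(s/2)\Lambda}\psi_s^{-1}\bigl[\psi_s e^{-(s/2)\Lambda}\psi_s^{-1}\bigr]$ does not actually reduce the difficulty: the residual factor $e^{-(s/2)\Lambda}\psi_s^{-1}$ carries the \emph{same} singular weight $\psi_s^{-1}$ over a comparable time, so you have spent one use of $(B_3)$ without changing the nature of the problem. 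Worse, the next step you outline --- an $L^1\to L^q$ bound on multiplication by $\psi_s^{-1}$ --- cannot exist for $q>1$ on a non-atomic measure space, and the H\"{o}lder split $\psi_s^{-1}=\psi_s^{-\theta}\,\psi_s^{-(1-\theta)}$ leaves a factor $\psi_s^{-(1-\theta)}g$ that is again weighted, so the estimate never closes. You correctly diagnose this; the diagnosis does not fix it.

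What is missing is precisely the mechanism the paper names --- a \emph{weighted Nash initial estimate} in the spirit of \cite{N}. The engine is an iteration on $t\mapsto\|e^{-t\Lambda}\psi_s^{-1}f\|_2^2$ driven by the differential inequality coming from the Sobolev embedding $(B_{12})$, combined with a weighted Nash-type interpolation inequality of the form
\[
\|v\|_2\;\le\;C\,\|v\|_{2j}^{\,\lambda}\,\|\psi_s v\|_1^{\,1-\lambda}\,\bigl(\text{terms controlled by }(B_{22}),(B_{23})\bigr),
\]
in which the conjugate exponent $q'=2/(1-\theta)$ of $(B_{23})$ is chosen exactly so that the three H\"{o}lder exponents close. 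Here $(B_3)$ supplies the bound on $\|\psi_s v(t)\|_1$ that plays the role of the usual conserved $L^1$-norm in Nash's scheme, and $(B_{22})$, $(B_{23})$ replace the fact that the constant weight~$1$ is locally integrable. Integrating the resulting differential inequality over a time window of length $\sim s$ yields $\|e^{-(s/2)\Lambda}\psi_s^{-1}\|_{1\to 2}\lesssim s^{-j'/2}$, and composing with $(B_{13})$ finishes. Your second suggestion (``Moser-type bootstrap using $(B_{12})$'') points in this direction, but without the explicit weighted interpolation inequality and without showing how $(B_{23})$'s specific exponent $q'=2/(1-\theta)$ makes the H\"{o}lder split consistent with the Sobolev exponent $j$, the argument is a plan, not a proof. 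Your other suggestion (a three-piece time decomposition) runs into the constraint $t\in[s/2,s]$ of $(B_3)$: a third factor would need $t=s/4$, which lies outside the allowed range for the weight $\psi_s$.
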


Theorem \ref{thmB} is a weighted Nash initial estimate \cite{N}.

\subsection{Proof of Theorem \ref{thm}}
Define weights $\psi_t \in C^2(\mathbb R^d - \{0\}) \cap C_b(\mathbb R^d)$ by $$\psi_t(y)=\eta(t^{-\frac{1}{\alpha}}|y|), \quad y \in \mathbb R^d,$$ where
\[
\eta(\tau)=\left \{
\begin{array}{ll}
\tau^\beta, & 0<\tau< 1, \\
\beta \tau(2-\frac{\tau}{2})+1-\frac{3}{2}\beta, &1\leq \tau < 2, \\
1+\frac{\beta}{2}, & \tau \geq 2
\end{array}
\right.
\]
(the constant $\beta$ is determined from the equation \eqref{beta_eq}).

Theorem \ref{thm} will follow from Theorem \ref{thmB} applied to the semigroup $e^{-t\Lambda} \equiv e^{-t\Lambda_2}$, $\Lambda_2 \supset (-\Delta)^{\frac{\alpha}{2}} - b \cdot \nabla$, which was constructed in Proposition \ref{prop1}. Thus, we will prove that for all $t \in [0,1]$, for a.e.\,$x$, $y \in \mathbb R^d$, 
$$
e^{-t\Lambda}(x,y) \leq Ct^{-\frac{d}{\alpha}}\psi_t(y),
$$
which yields Theorem \ref{thm}.

In Proposition \ref{prop1} we proved that $e^{-t\Lambda}$ satisfies conditions ($B_{11}$), ($B_{12}$) and ($B_{13}$) with $j'=\frac{d}{\alpha}$.
The condition ($B_{21}$) is evident. It is easily seen that $(B_{22}),(B_{23})$ hold with $$\Omega^s=B(0,s^{\frac{1}{\alpha}}), \quad \theta=\frac{(2-\alpha)d}{(2-\alpha)d+8\beta}.$$

It remains to verify ($B_3$). This step presents the main difficulty.
We will show that $\psi_s e^{-t\Lambda}\psi_s^{-1}$ is a quasi contraction semigroup in $L^1$, i.e.\,there exists $\hat{c}>0$ such that for any $s>0$
\begin{equation}
\label{accr0}
\|\psi_s e^{-t\Lambda}\psi_s^{-1}f\|_1 \leq e^{(\hat{c}s^{-1}+\sigma_2)t}\|f\|_1, \quad t>0.
\end{equation}
Then, taking $\frac{s}{2}\leq t \leq s$ and $t \in [0,1]$, we obtain ($B_3$). 

Intuitively, the generator of $\psi_s e^{-t\Lambda}\psi_s^{-1}$ should be $\psi_s \Lambda_1 \psi_s^{-1}$. Thus, it would suffice to show that  $\lambda + \psi_s \Lambda_1 \psi_s^{-1}$ is accretive in $L^1$ for some $\lambda>0$, i.e.\,formally, for all admissible $f$,
$$
\big\langle (\lambda + \psi_s \Lambda_1 \psi_s^{-1})f,\frac{f}{|f|}\big\rangle \geq 0.
$$
 However, a direct calculation is problematic:  $\Lambda_1$ is not an algebraic sum of $(-\Delta)_{L_1}^{\frac{\alpha}{2}}$ and $(b \cdot \nabla)_{L^1}$, there is no explicit description of the domain $D(\Lambda_1)$ and, furthermore, $\psi_s^{-1}$ is unbounded.
Instead, we will carry out an approximation argument, replacing $\Lambda_1$ by the approximating operators $\Lambda^\varepsilon$, $\varepsilon>0$ introduced in Section \ref{main_sect}, and then  replacing the weight $\psi_s$ by its smooth approximations $\phi_{s,\varepsilon}$ bounded away from $0$ and so that $\phi_{s,\varepsilon}^{-1}$ is bounded. Now, however, if we define $\phi_{s,\varepsilon}$ by applying a standard (e.g.\,Friedrichs) 
mollifier to $\psi_s$, the task of evaluating $\phi_{s,\varepsilon} \Lambda^\varepsilon \phi_{s,\varepsilon}^{-1}f$ remains quite non-trivial.
We overcome this difficulty by considering a mollifier defined in terms of $\Lambda^\varepsilon$, see \eqref{reg_weight} below. 
This choice of the mollification is a key step in the proof.

In addition to the approximating operators $\Lambda_r^\varepsilon$, $\varepsilon>0$ in $L^r$, $r \in [1,\infty[$, we define in $C_u$
$$
\Lambda_{C_u}^\varepsilon:=-\varepsilon \Delta + (-\Delta)^{\frac{\alpha}{2}} - b_\varepsilon \cdot \nabla, \quad D(\Lambda_{C_u}^\varepsilon)=D((-\Delta)_{C_u}).
$$
Similarly to $\Lambda_r^\varepsilon$, for every $\varepsilon>0$ the operator $\Lambda_{C_u}^\varepsilon$ is
the generator of a positivity preserving contraction holomorphic semigroup (cf.\,\cite[Sect.\,8]{KS}).

We will also need
$$
(\Lambda^\varepsilon)_r^*:=-\varepsilon \Delta + (-\Delta)^{\frac{\alpha}{2}} + \nabla \cdot b_\varepsilon, \quad D(\Lambda_r^\varepsilon)=\mathcal W^{2,r}, \quad r \in [1,\infty[$$
$$
(\Lambda_\varepsilon)^*_{C_u}:=-\varepsilon \Delta + (-\Delta)^{\frac{\alpha}{2}} + \nabla \cdot b_\varepsilon, \quad D(\Lambda_{C_u}^\varepsilon)=D((-\Delta)_{C_u}).
$$
These are also generators of positivity preserving $L^\infty$ contraction quasi contraction holomorphic semigroups.
Moreover, there exists a constant $c_N$ independent of $\varepsilon$ such that, for  all $1 \leq r < q \leq \infty$,
\begin{equation}
\label{ultra}
\|e^{-t(\Lambda^\varepsilon)^*_r}\|_{r \rightarrow q} \leq c_N t^{-\frac{d}{\alpha}(\frac{1}{r}-\frac{1}{q})}, \quad t>0.
\end{equation}
Indeed, for $1 < r \leq q < \infty$ the ultracontractivity estimate follows from Claim \ref{claim0} by duality, and for all $1 \leq r \leq q \leq \infty$ upon taking limits $r \downarrow 1$, $q \uparrow \infty$.

In what follows, $s$ is fixed (since $\frac{s}{2} \leq t \leq s$, we have $s \leq 2$). We introduce the following two-parameter approximation of $\psi \equiv \psi_s$:
\begin{equation}
\label{reg_weight}
\phi_{n,\varepsilon}:=n^{-1} + e^{-\frac{(\Lambda^\varepsilon)^*}{n}}\psi \qquad (\varepsilon>0,\;n=1,2,\dots)
\end{equation}

\medskip

In $L^1$, define operators
\[
Q=\phi_{n,\varepsilon} \Lambda_1^\varepsilon \phi_{n,\varepsilon}^{-1}, \quad D(Q)=\phi_{n,\varepsilon} D(\Lambda^\varepsilon)
\]
and strongly continuous semigroups
$$e^{-tG}:=\phi_{n,\varepsilon} e^{-t\Lambda_1^\varepsilon}\phi_{n,\varepsilon}^{-1}.$$
Our goal is to show that $e^{-tG}$ satisfies
\begin{equation}
\label{accr3}
\|e^{-tG}f\|_1 \leq e^{(\hat{c}s^{-1}+\sigma_2+n^{-1})t}\|f\|_1, \quad t>0,
\end{equation}
so that we can pass to the limit (first in $\varepsilon$ and then in $n$) to establish \eqref{accr0}.
The difficulty is that a priori we have little information about $G$ to conclude \eqref{accr3}. On the other hand, we have detailed information about $Q$ and, moreover, intuitively $Q$ should coincide $G$. We prove this in Steps 1-3 below. 

\medskip

\textit{Step 1}.  Set
\begin{align*}
M:=&\,\phi_{n,\varepsilon}(1-\Delta)^{-1}[L^1 \cap C_{u}].
\end{align*}
This is a dense subspace of $L^1$ such that $$M\subset D(Q), \quad M\subset D(G)$$ and, furthermore, 
\begin{equation*}
Q\upharpoonright M\subset G.
\end{equation*}
(Indeed, for $f=\phi_{n,\varepsilon} u\in M$,
\[
Gf=s\mbox{-}L^1\mbox{-}\lim_{t\downarrow 0}t^{-1}(1-e^{-tG})f=\phi_{n,\varepsilon} s\mbox{-}L^1\mbox{-}\lim_{t\downarrow 0}t^{-1}(1-e^{-t\Lambda^\varepsilon})u=\phi_{n,\varepsilon} \Lambda^\varepsilon u=Qf\;).
\]
Thus $Q\upharpoonright M$ is closable and $$\tilde{Q}:=(Q\upharpoonright M)^{\rm clos}\subset G.$$
A standard argument shows that the range $\lambda_\varepsilon +\tilde{Q}$ is dense in $L^1$ (see \cite[Proof of Prop.\,1]{KS} for details).

\medskip

\textit{Step 2}. \textit{There are constants $\hat{c}>0$ and $\varepsilon_n>0$ such that, for every $n$ and all $0<\varepsilon 
\leq \varepsilon_n$,
  the operator $\lambda+\tilde{Q}$ is accretive whenever $\lambda\geq \hat{c} s^{-1}+ \sigma_2 + n^{-1}$,
i.e.
\begin{equation}
\label{accr}
\Real\langle (\lambda + \tilde{Q})f,\frac{f}{|f|}\rangle \geq 0 \quad \text{ for all } f \in D(\tilde{Q}),\end{equation}
where $s>0$ is from the definition of the weight $\phi_{n,\varepsilon}$. }

\medskip

Proof of \eqref{accr}.
We can represent $\psi \equiv \psi_s$ as $$\psi=\psi_{(1)} + \psi_{(u)}, \quad \text{where }0 \leq \psi_{(1)} \in D((-\Delta)_1), \qquad 0 \leq \psi_{(u)} \in D((-\Delta)_{C_{u}})$$
(e.g.\,$\psi_{(u)}:=1+\frac{\beta}{2}$ so $\psi_{(1)}$ has compact support and coincides with $s^{-\frac{\beta}{\alpha}}|x|^{\beta}$ around the origin). 
Therefore,
$$(\Lambda^\varepsilon)^*\psi=(\Lambda^\varepsilon)^*_{L^1}\psi_{(1)} + (\Lambda^\varepsilon)^*_{C_u}\psi_{(u)}$$ is well defined and belongs to $L^1 + C_u=\{w+v\mid w\in L^1, v\in C_u\}$.

By the construction of $\tilde{Q}$, it suffices to prove that
\begin{equation}
\label{accr2}
\Real\langle (\lambda + Q)f,\frac{f}{|f|}\rangle \geq 0 \quad \text{ for all } f \in M.
\end{equation} 
In what follows, we use the fact that both $e^{-t\Lambda^\varepsilon}$, $e^{-t(\Lambda^\varepsilon)^*}$ are holomorphic in $L^1$ and $C_u$. We have, for a $f=\phi_{n,\varepsilon} u$, $u \in (1-\Delta)^{-1}[L^1 \cap C_u]$,
$$
\langle Qf,\frac{f}{|f|}\rangle=\langle \phi_{n,\varepsilon} \Lambda^\varepsilon u,\frac{f}{|f|}\rangle=\lim_{t\downarrow 0}t^{-1}\langle\phi_{n,\varepsilon}(1-e^{-t \Lambda^\varepsilon})u,\frac{f}{|f|}\rangle,
$$
so
\begin{align*}
\Real\langle Qf,\frac{f}{|f|}\rangle & \geq\lim_{t\downarrow 0}t^{-1}\langle(1-e^{-t \Lambda^\varepsilon})|u|,\phi_{n,\varepsilon}\rangle \notag \\
&=\lim_{t\downarrow 0}t^{-1}\langle (1-e^{-t\Lambda^\varepsilon}) |u|,n^{-1}\rangle + \lim_{t\downarrow 0}t^{-1}\langle (1-e^{-t\Lambda^\varepsilon}) e^{-\frac{\Lambda^\varepsilon}{n}}|u|,\psi\rangle\\
&=\lim_{t\downarrow 0}t^{-1}\langle |u|,(1-e^{-t(\Lambda^\varepsilon)^*})n^{-1}\rangle+\lim_{t\downarrow 0}t^{-1}\langle e^{-\frac{\Lambda^\varepsilon}{n}}|u|,(1-e^{-t(\Lambda^\varepsilon)^*})\psi\rangle \\
&=\langle |u|, (\Lambda^\varepsilon)^*n^{-1}\rangle + \langle e^{-\frac{\Lambda^\varepsilon}{n}}|u|, (\Lambda^\varepsilon)^*\psi\rangle =: J_{1}+ J_{2},
\end{align*}
A simple calculation shows that ${\rm div\,}b_\varepsilon \geq -\sigma_2$ on $\mathbb R^d$ (cf.\,the proof of Claim \ref{claim1}) and so, since $\phi^{-1}_{n,\varepsilon} \leq n$, $$J_{1} \geq -\sigma_2 \|f\|_{1}.$$
We estimate $J_2$ using the next lemma. (It is in its proof that we use the fact that $|x|^\beta$ is a Lyapunov function of the formal operator $(-\Delta)^{\frac{\alpha}{2}} + \nabla \cdot \kappa|x|^{-\alpha}x$.)

\begin{lemma}
\label{aest_lem}
\begin{align*}
\Lambda^\varepsilon)^*\psi \geq -\hat{c}s^{-1}\psi - V_\varepsilon \quad \text{ on } \mathbb R^d,
\end{align*}
where $V_\varepsilon=
\varepsilon c_0\mathbf{1}_{|x| \leq 4^{1/\alpha}}|x|^{-2+\beta} + \mathbf{1}_{|x|<1}\kappa(d+\beta-\alpha)(|x|_\varepsilon^{-\alpha}-|x|^{-\alpha})|x|^{\beta} + c\mathbf{1}_{1\leq |x|\leq 2}|b_{\varepsilon}-b|$ for generic constants $\hat{c}$, $c_0$, $c$.
\end{lemma}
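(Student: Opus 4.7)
The plan is to evaluate $(\Lambda^\varepsilon)^*\psi = -\varepsilon\Delta\psi + (-\Delta)^{\alpha/2}\psi + \nabla\cdot(b_\varepsilon\psi)$ region by region, exploiting at every step the fact that, by the defining equation \eqref{beta_eq} of $\beta$, the function $\tilde\psi(x):=s^{-\beta/\alpha}|x|^\beta$ satisfies the Lyapunov identity
\[
(-\Delta)^{\alpha/2}\tilde\psi + \nabla\cdot\bigl(\kappa|x|^{-\alpha}x\,\tilde\psi\bigr)=0 \qquad \text{on }\mathbb R^d\setminus\{0\}.
\]
Recall that $\psi=\tilde\psi$ on $\{|x|<s^{1/\alpha}\}$, that $\psi$ is a smooth cutoff on $\{s^{1/\alpha}\le|x|\le 2s^{1/\alpha}\}$ and constant on $\{|x|\ge 2s^{1/\alpha}\}$, while $b_\varepsilon(x)=\kappa|x|_\varepsilon^{-\alpha}x$ on $\{|x|<1\}$ and agrees with the regular $b$ on $\{|x|\ge 2\}$. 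Since $s\le 2$, the relevant singular zone lies in $\{|x|\le 4^{1/\alpha}\}$.

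First I would treat the innermost region $\{|x|<s^{1/\alpha}\wedge 1\}$. Using the Lyapunov identity for $\tilde\psi=\psi$, everything nonlocal and divergent cancels except for the difference produced by replacing $b$ by $b_\varepsilon$. A direct computation via $\mathrm{div}(|x|^p x)=(p+d)|x|^p$ and its $|x|_\varepsilon$-analogue, together with the product rule, yields
\[
(-\Delta)^{\alpha/2}\psi+\nabla\cdot(b_\varepsilon\psi) \ge -\kappa(d+\beta-\alpha)(|x|_\varepsilon^{-\alpha}-|x|^{-\alpha})\tilde\psi,
\]
whose right-hand side is nonpositive since $|x|_\varepsilon\ge|x|$; this supplies the middle term of $V_\varepsilon$ (with the bounded factor $s^{-\beta/\alpha}\mathbf 1_{|x|<s^{1/\alpha}}$ absorbed into a generic constant). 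To this I add $-\varepsilon\Delta\tilde\psi=-\varepsilon\, s^{-\beta/\alpha}\beta(d+\beta-2)|x|^{\beta-2}$, which (using $d\ge 3$ and $0<\beta<\alpha\le 1$) is nonpositive and yields the first term of $V_\varepsilon$.

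Next, in the weight-cutoff annulus $\{s^{1/\alpha}\le|x|\le 2s^{1/\alpha}\}$, $\psi$ is smooth with $|\nabla\psi|\lesssim s^{-1/\alpha}$, $|\Delta\psi|\lesssim s^{-2/\alpha}$, and $\psi$ is bounded above and below by positive constants; since $|x|\asymp s^{1/\alpha}$ and $b_\varepsilon$ is $\kappa|x|_\varepsilon^{-\alpha}x$ (if $s^{1/\alpha}<1$) or its bounded smooth extension, each of $-\varepsilon\Delta\psi$ and $\nabla\cdot(b_\varepsilon\psi)$ contributes at worst $O(s^{-1})\psi$, absorbable into $-\hat c s^{-1}\psi$. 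The outer region $\{|x|\ge 2s^{1/\alpha}\}$ is handled similarly: $\psi$ is constant, so $-\varepsilon\Delta\psi=0$, and the hypothesis $|\mathrm{div}\,b_\varepsilon|\le C|x|^{-\alpha}\wedge \sigma_2$ turns $\nabla\cdot(b_\varepsilon\psi)=\psi\,\mathrm{div}\,b_\varepsilon$ into a term absorbable by $-\hat c s^{-1}\psi$. The drift-cutoff annulus $\{1\le|x|\le 2\}$ contributes $\nabla\cdot((b_\varepsilon-b)\psi)$, whose magnitude is pointwise bounded by $c|b_\varepsilon-b|$ plus lower-order pieces (since $\psi$ and $\nabla\psi$ are bounded there), giving the third term of $V_\varepsilon$.

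The main obstacle is that the Lyapunov identity is stated for $\tilde\psi$ rather than $\psi$, so one must control $(-\Delta)^{\alpha/2}(\psi-\tilde\psi)$ at points in every region. Writing $\psi=\tilde\psi+(\psi-\tilde\psi)$ with the remainder bounded, smooth, and supported in $\{|x|\ge s^{1/\alpha}\}$, the integral
\[
(-\Delta)^{\alpha/2}(\psi-\tilde\psi)(x)=c_{d,\alpha}\,\text{p.v.}\!\int_{\mathbb R^d}\frac{(\psi-\tilde\psi)(x)-(\psi-\tilde\psi)(y)}{|x-y|^{d+\alpha}}dy,
\]
is estimated by rescaling $z=s^{-1/\alpha}(x-y)$ in the singular kernel, yielding a quantity of order $s^{-1}\psi(x)$, again absorbed into $-\hat c s^{-1}\psi$. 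Collecting all region-wise bounds produces the asserted lower bound $-\hat c s^{-1}\psi - V_\varepsilon$.
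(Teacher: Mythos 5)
Your overall strategy is the same as the paper's: pivot on the Lyapunov identity for $\tilde\psi(x)=s^{-\beta/\alpha}|x|^\beta$, split $(\Lambda^\varepsilon)^*\psi$ into the exact Lyapunov part plus corrections, and estimate the corrections region by region. The regionwise treatment of ${\rm div\,}[(b_\varepsilon-b)\psi]$ (on $\{|x|<1\}$, $\{1\le|x|\le 2\}$, $\{|x|>2\}$) and of $h_1={\rm div\,}[b(\psi-\tilde\psi)]$ is aligned with the paper's Claim \ref{Cl3}. However, your handling of the non-local correction $(-\Delta)^{\alpha/2}(\psi-\tilde\psi)$ — which you correctly identify as ``the main obstacle'' — contains a false claim and a genuine gap.

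The false claim: $\psi-\tilde\psi$ is not bounded. $\psi$ is bounded while $\tilde\psi(x)=s^{-\beta/\alpha}|x|^\beta\to\infty$ as $|x|\to\infty$, so $\psi-\tilde\psi\to-\infty$. Your rescaling estimate is predicated on treating the remainder as a bounded bump supported near scale $s^{1/\alpha}$, and that premise fails.

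The genuine gap: the magnitude bound $|(-\Delta)^{\alpha/2}(\psi-\tilde\psi)(x)|\lesssim s^{-1}\psi(x)$ that your rescaling aims for is simply false near the origin. For $|x|\ll s^{1/\alpha}$ one has $(\psi-\tilde\psi)(x)=0$, so
\[
(-\Delta)^{\alpha/2}(\psi-\tilde\psi)(x)=-c_{d,\alpha}\int_{|y|\ge s^{1/\alpha}}\frac{(\psi-\tilde\psi)(y)}{|x-y|^{d+\alpha}}\,dy,
\]
which converges to a strictly positive quantity of order $s^{-1}$ as $x\to 0$, whereas $s^{-1}\psi(x)\to 0$. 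No rescaling of the kernel can rescue a bound by $s^{-1}\psi(x)$ pointwise. What actually saves the lemma is the \emph{sign}, not the magnitude: $\psi-\tilde\psi\le 0$, so the displayed integral is nonnegative and the term may simply be dropped from a lower bound. The paper obtains this cleanly by writing $(-\Delta)^{\alpha/2}=I_{2-\alpha}(-\Delta)$ with $I_{2-\alpha}$ the (positivity-preserving) Riesz potential, and checking $-\Delta(\psi-\tilde\psi)\ge 0$ on all of $\mathbb R^d$ (which is the content of \cite[Remark 4]{KS}); hence $(-\Delta)^{\alpha/2}\psi\ge(-\Delta)^{\alpha/2}\tilde\psi$ pointwise. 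Your argument needs to be replaced by this sign observation.

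A smaller inaccuracy: in the cutoff annulus $\{s^{1/\alpha}\le|x|\le 2s^{1/\alpha}\}$ you absorb $-\varepsilon\Delta\psi$ into $-\hat c s^{-1}\psi$, but $|\Delta\psi|\sim s^{-2/\alpha}$ there, which for $s\le 1$ and $\alpha\le 1$ is \emph{not} $O(s^{-1})$. That term is controlled only because of the prefactor $\varepsilon$ and belongs in the auxiliary potential, as the paper's first term $\varepsilon c_0\mathbf 1_{|x|\le 4^{1/\alpha}}|x|^{-2+\beta}$ of $V_\varepsilon$ reflects; you should assign it there in every region, not only the innermost one.
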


We will show below that the auxiliary potential $V_\varepsilon$ becomes negligible as $\varepsilon \downarrow 0$.

Lemma \ref{aest_lem} yields
$$
J_2 \geq - cs^{-1} \langle  e^{-\frac{\Lambda^{\varepsilon}}{n}}|u|, \psi \rangle- \langle  e^{-\frac{\Lambda^{\varepsilon}}{n}}|u|, V_{\varepsilon}\psi \rangle.
$$
Hence, taking into account the estimate on $J_1$,
\begin{align}
  \Real \langle Qf, \frac{f}{|f|} \rangle  &\geq  
  -\sigma_2 \|f\|_{1} - \hat{c}s^{-1} \langle |u|, e^{-\frac{(\Lambda^{\varepsilon})^{*}}{n}} \psi \rangle - \langle  e^{-\frac{(\Lambda^{\varepsilon})}{n}}|u|, V_{\varepsilon} \rangle \notag \\
& \text{(recall that $|u|=\phi_{n, \varepsilon}^{-1}|f|$  and $\phi_{n, \varepsilon} = n^{-1}+ e^{-\frac{(\Lambda^{\varepsilon})^{*}}{n}}\psi$ ) } \notag \\
& \geq -(cs^{-1}+\sigma_2 ) \|f\|_{1}-\langle |u|,e^{-\frac{(\Lambda^{\varepsilon})^{*}}{n}}( V_{\varepsilon}) \rangle. \tag{$\ast$}
\label{Qf}
\end{align}
By the ultracontractivity of $e^{-t(\Lambda^\varepsilon)^*}$, see \eqref{ultra}, and the fact that $\|V_\varepsilon\|_1 \downarrow 0$ as $\varepsilon \downarrow 0$, we have for every $n \geq 1$
\begin{align*}
\|e^{-\frac{(\Lambda^\varepsilon)^*}{n}}V_\varepsilon\|_\infty & \leq  c_N n^{\frac{d}{\alpha}}\|V_\varepsilon\|_1 \\
& (\text{we choose $\varepsilon_n>0$ such that for all $\varepsilon\leq \varepsilon_n$ $\|V_\varepsilon\|_1 \leq n^{-2}(c_N n^{\frac{d}{\alpha}})^{-1}$}) \\
& \leq  n^{-2}.
\end{align*}
Thus, since $\phi_{n,\varepsilon} \geq n^{-1}$, we have, for every $n=1,2,\dots$ and all $0<\varepsilon\leq \varepsilon_n$, $$\langle |u|,e^{-\frac{(\Lambda^{\varepsilon})^{*}}{n}}( V_{\varepsilon}\psi) \rangle \leq n^{-1}\|f\|_1.$$
Applying the latter in \eqref{Qf}, we obtain \eqref{accr2} $\Rightarrow$ \eqref{accr}.

\bigskip

\textit{Step 3}. Since $\tilde{Q}$ is closed and the range of $\lambda +\tilde{Q}$ is dense in $L^1$, the accretivitiy of $\lambda + \tilde{Q}$ in $L^1$ imply that the range of $\lambda_\varepsilon +\tilde{Q}$ is  in fact $L^1$ (see e.g.\,\cite[Appendix C]{KS}). Hence, by the Lumer-Phillips Theorem, $\lambda+\tilde{Q}$ is the generator of a contraction semigroup, and, since $\tilde{Q}\subset G$, we have
$$
\tilde{Q}=G.
$$ 

\smallskip

As a consequence of Steps 1-3, we obtain: for all $\varepsilon \leq \varepsilon_n$, $n=1,2,\dots$,
 \[
\label{star}
 \|e^{-tG}\|_{1\to 1}\equiv\|\phi_{n,\varepsilon} e^{-t \Lambda^\varepsilon}\phi_{n,\varepsilon}^{-1}\|_{1\to 1}\leq e^{(\hat{c} s^{-1} + \sigma_2 +n^{-1})t}. \tag{$\star$}
 \]
We pass to the limit in \eqref{star} in $\varepsilon \downarrow 0$ using Proposition \ref{prop1}, and then take $n \rightarrow \infty$. (See detailed argument in \cite{KS}.) This yields ($B_3$) and ends the proof of Theorem \ref{thm}. \hfill \qed

\bigskip

\section{Proof of Lemma \ref{aest_lem}}

Recall $\psi \equiv \psi_s$, $s \leq 2$.
We estimate the RHS of 
\begin{equation}
(\Lambda^{\varepsilon})^{*} \psi =  -\varepsilon \Delta \psi +(-\Delta)^{\frac{\alpha}{2}} \psi +{\rm div\,}( b_\varepsilon \psi)
\label{L_psi}
\end{equation}
in the next three claims. The first claim is straightforward:

\begin{claim}
\label{Cl1}
$-\varepsilon \Delta \psi \geq -P_\varepsilon,$
where $P_\varepsilon=
\varepsilon c_0\mathbf{1}_{|x| \leq 4^{1/\alpha}}|x|^{-2+\beta}$ for a generic constant $c_0$.
\end{claim}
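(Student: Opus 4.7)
The inequality is a direct pointwise computation based on the piecewise-$C^2$ structure of $\psi_s(y)=\eta(s^{-1/\alpha}|y|)$. I would split $\mathbb R^d$ into the three regions corresponding to $\tau:=s^{-1/\alpha}|y|\in (0,1)$, $[1,2)$, and $[2,\infty)$, and bound $\Delta\psi_s$ separately on each. In the outer region $|y|\geq 2s^{1/\alpha}$, $\psi_s$ is the constant $1+\tfrac{\beta}{2}$, so $\Delta\psi_s=0$. Since $s\leq 2$ and $\alpha\leq 1$ give $2s^{1/\alpha}\leq 2^{1+1/\alpha}\leq 2^{2/\alpha}=4^{1/\alpha}$, the support of $\Delta\psi_s$ lies in $\{|y|\leq 4^{1/\alpha}\}$, which accounts for the indicator appearing in $P_\varepsilon$.

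In the inner region $|y|<s^{1/\alpha}$, $\psi_s(y)=s^{-\beta/\alpha}|y|^\beta$, and the identity $\Delta|y|^\beta=\beta(\beta+d-2)|y|^{\beta-2}$ gives
$$
\Delta\psi_s(y)=s^{-\beta/\alpha}\beta(\beta+d-2)|y|^{\beta-2},
$$
which is pointwise of the form $c_0|y|^{\beta-2}$ (with $c_0$ absorbing the $s^{-\beta/\alpha}$ scaling). In the transition annulus $s^{1/\alpha}\leq|y|\leq 2s^{1/\alpha}$, the chain rule yields $\Delta\psi_s(y)=s^{-2/\alpha}\bigl[\eta''(\tau)+(d-1)\eta'(\tau)/\tau\bigr]_{\tau=s^{-1/\alpha}|y|}$, and the bracket is bounded uniformly on $[1,2]$ because $\eta$ is a fixed smooth quadratic there. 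To convert this to the target form, I would use that $\beta<2$ makes $|y|^{\beta-2}$ decreasing, so $|y|\leq 2s^{1/\alpha}$ gives $s^{(\beta-2)/\alpha}\leq 2^{2-\beta}|y|^{\beta-2}$ and hence $s^{-2/\alpha}=s^{-\beta/\alpha}\cdot s^{(\beta-2)/\alpha}\leq C s^{-\beta/\alpha}|y|^{\beta-2}$, placing the annular contribution in the same form as the inner-region one.

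Combining the three regional bounds and multiplying by $\varepsilon$ yields $\varepsilon\Delta\psi_s(y)\leq P_\varepsilon(y)$, i.e. $-\varepsilon\Delta\psi\geq -P_\varepsilon$. There is no serious obstacle here — the authors flag the claim as straightforward — and the only bookkeeping point worth attention is the conversion of $s^{-2/\alpha}$ to $|y|^{\beta-2}$ in the transition annulus, which genuinely uses $\beta<2$. Note that the resulting $P_\varepsilon$ has $\|P_\varepsilon\|_1=O(\varepsilon)$ (the integral $\int_{|y|\leq 4^{1/\alpha}}|y|^{\beta-2}dy$ is finite because $d+\beta-2>0$ for $d\geq 3$, $\beta>0$), which is precisely what the proof of Lemma \ref{aest_lem} needs when it sends $\varepsilon\downarrow 0$.
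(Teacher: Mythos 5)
Your computation is correct and is the ``straightforward'' argument the paper is alluding to (no proof is given in the paper, so this is the expected route): decompose into the three regions of $\tau=s^{-1/\alpha}|y|$, use $\Delta|y|^\beta=\beta(\beta+d-2)|y|^{\beta-2}$ in the inner ball, the boundedness of the bracket $\eta''(\tau)+(d-1)\eta'(\tau)/\tau$ together with $|y|\le 2s^{1/\alpha}$ in the annulus, and the vanishing of $\Delta\psi_s$ outside; your check that $2s^{1/\alpha}\le 4^{1/\alpha}$ correctly justifies the indicator, and your remark that $d+\beta-2>0$ makes $P_\varepsilon$ integrable is exactly what Lemma~\ref{aest_lem} needs downstream.

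The one point worth pausing on is the $s$-dependence you yourself flag in parentheses. Your argument produces
\[
\varepsilon\Delta\psi_s(y)\le C\,\varepsilon\,s^{-\beta/\alpha}\,\mathbf 1_{|y|\le 4^{1/\alpha}}\,|y|^{\beta-2},
\]
so the constant you absorb into $c_0$ is $\asymp s^{-\beta/\alpha}$, which has no upper bound uniform in $s\in(0,2]$. This is in tension with the paper's phrase ``for a generic constant $c_0$'' -- by the paper's own definition a generic constant depends only on $d,\kappa,\alpha,\sigma_1,\sigma_2$ and not on $s$ -- and indeed the claim is false if $c_0$ is taken literally to be $s$-independent (evaluate at $|y|=s^{1/\alpha}/2$ and let $s\downarrow 0$). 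The resolution is that this is a harmless looseness in the paper's statement rather than a defect in the argument: $s$ is fixed throughout Steps 1--3, the auxiliary potential $V_\varepsilon$ is only required to satisfy $\|V_\varepsilon\|_1\to 0$ as $\varepsilon\downarrow 0$ at that fixed $s$, and the threshold $\varepsilon_n$ may (and implicitly does) depend on $s$. The same suppressed $s^{-\beta/\alpha}$ factor is also latent in the $U_\varepsilon$ term of Claim~\ref{Cl3}, where the proof compares $\nabla\psi$ and $\nabla|x|^\beta$. So: your proof is sound, and it is worth recording explicitly -- as you almost do -- that the ``constant'' $c_0$ is really $c_0(s)\asymp s^{-\beta/\alpha}$, and that this $s$-dependence is immaterial because it never has to be uniform.
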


To estimate the second term in \eqref{L_psi}, we introduce
$$
\tilde{\psi}(x):=s^{-\frac{\beta}{\alpha}}|x|^\beta.
$$
Clearly, $\psi$ and $\tilde{\psi}$ coincide in $B(0,s^{\frac{1}{\alpha}}),$
however, in contrast to $\psi$, the Lyapunov function $\tilde{\psi}$ grows at infinity.

\begin{claim}
\label{Cl2}
$(-\Delta)^{\frac{\alpha}{2}} \psi \geq - \beta(\beta-2+d)\frac{ \gamma(d+\beta-2)}{\gamma(d+\beta-\alpha)}|x|^{-\alpha}\tilde{\psi}$
\end{claim}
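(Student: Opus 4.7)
The plan is to reduce Claim 2 to a pointwise identity for $\tilde{\psi}$ plus a sign-definite correction. The key algebraic input is that the defining equation \eqref{beta_eq} for $\beta$ is precisely equivalent, via the homogeneity of $|x|^\beta$ and the Lyapunov identity $\bigl[(-\Delta)^{\frac{\alpha}{2}} + \nabla\cdot(\kappa|x|^{-\alpha}x)\bigr]|x|^\beta = 0$, to the pointwise statement
$$
(-\Delta)^{\frac{\alpha}{2}}\tilde{\psi}(x) \;=\; -\beta(d+\beta-2)\,\frac{\gamma(d+\beta-2)}{\gamma(d+\beta-\alpha)}\,|x|^{-\alpha}\tilde{\psi}(x),\qquad x\neq 0,
$$
using the divergence computation $\nabla\cdot(\kappa|x|^{-\alpha+\beta}x)=\kappa(d+\beta-\alpha)|x|^{\beta-\alpha}$. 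Thus Claim 2 is equivalent to the pointwise bound $(-\Delta)^{\frac{\alpha}{2}}(\psi-\tilde{\psi})(x)\geq 0$ on $\mathbb{R}^d$.

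The first step is to establish $\psi\leq\tilde{\psi}$ globally, i.e., $\eta(\tau)\leq\tau^\beta$ for all $\tau\geq 0$. Equality holds on $[0,1]$. For $\tau\geq 2$, since $\psi$ is frozen at $1+\beta/2$ on this range, one uses $2^\beta>1+\beta/2$, which follows from $g(\beta):=2^\beta-1-\beta/2$ satisfying $g(0)=0$ and $g'(\beta)=2^\beta\ln 2-\tfrac12>0$. On the transition interval $[1,2]$, the function $h(\tau):=\eta(\tau)-\tau^\beta$ has $h(1)=h'(1)=0$ and
$h''(\tau) = -\beta\bigl[1+(\beta-1)\tau^{\beta-2}\bigr] < 0$,
since $\tau^{2-\beta}\geq 1 > 1-\beta$ for $\tau\geq 1$ and $\beta\in(0,1)$; hence $h\leq 0$ there.

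With $\psi\leq\tilde{\psi}$ established, I would invoke the singular integral representation of $(-\Delta)^{\frac{\alpha}{2}}$ and analyse three regions. On $B(0,s^{1/\alpha})$ the difference $\psi-\tilde{\psi}$ vanishes at $x$ and is non-positive elsewhere, so the principal value collapses to $-c_{d,\alpha}\int(\psi-\tilde{\psi})(y)|x-y|^{-d-\alpha}\,dy\geq 0$. On $\{|x|\geq 2s^{1/\alpha}\}$, the value $\psi(x)=1+\beta/2$ is the global maximum of $\psi$, whence $(-\Delta)^{\frac{\alpha}{2}}\psi(x)\geq 0$ directly, and this dominates the non-positive RHS of the claim.

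The main obstacle is the transition annulus $s^{1/\alpha}\leq|x|\leq 2s^{1/\alpha}$, where $\psi(x)$ is neither $\tilde{\psi}(x)$ nor the maximum. Here the plan is to exploit the radial monotonicity of $\eta$ on $[0,2]$ together with the unbounded growth of $\tilde{\psi}$ at infinity: in the representation of $(-\Delta)^{\frac{\alpha}{2}}(\psi-\tilde{\psi})(x)$, the negative contribution from $\{|y|<|x|\}$ is controlled by the strongly positive contribution from $\{|y|\gg|x|\}$, on which $\tilde{\psi}(y)-\psi(y)$ grows like $|y|^\beta$. The estimate is reduced to the fixed-scale case $s=1$ via the scaling $\tau=s^{-1/\alpha}|x|$, which makes all constants uniform in $s$ and completes the proof.
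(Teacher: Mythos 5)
Your decomposition $(-\Delta)^{\alpha/2}\psi = (-\Delta)^{\alpha/2}\tilde\psi + (-\Delta)^{\alpha/2}(\psi-\tilde\psi)$ and the exact identity for $(-\Delta)^{\alpha/2}\tilde\psi$ agree with the paper's proof, and the verification that $\psi\leq\tilde\psi$ (via $h(1)=h'(1)=0$, $h''<0$ on $[1,2]$, and $2^\beta>1+\beta/2$) is correct. But the pointwise ordering $\psi\leq\tilde\psi$ does not by itself imply the needed inequality $(-\Delta)^{\alpha/2}(\psi-\tilde\psi)\geq 0$, and this is where the proposal breaks down.

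On the annulus $s^{1/\alpha}\leq|x|\leq 2s^{1/\alpha}$ you offer only a plan: ``the negative contribution from $\{|y|<|x|\}$ is controlled by the strongly positive contribution from $\{|y|\gg|x|\}$.'' No estimate is actually carried out, and none is obvious. Writing $g=\psi-\tilde\psi$, a point $x$ in the annulus has $g(x)<0$ small, the region $B(0,s^{1/\alpha})$ (where $g\equiv 0>g(x)$) contributes a negative amount to the principal value, and the far field (where $g(y)\sim -s^{-\beta/\alpha}|y|^\beta$) contributes positively but through a convergent integral precisely because $\beta<\alpha$ — so it does not diverge, and the sign of the sum is genuinely in question. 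Reducing to $s=1$ by scaling is correct but does not by itself resolve the sign; it merely fixes the scale at which one must still produce the estimate.

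The paper avoids the nonlocal estimate altogether: it writes $(-\Delta)^{\alpha/2}(\psi-\tilde\psi)=-I_{2-\alpha}\Delta(\psi-\tilde\psi)$ with $I_{2-\alpha}=(-\Delta)^{-(2-\alpha)/2}$ and verifies the \emph{local} inequality $-\Delta(\psi-\tilde\psi)\geq 0$; since $I_{2-\alpha}$ has a positive kernel, the claim follows. Checking $-\Delta(\psi-\tilde\psi)\geq 0$ is a one-dimensional radial computation on each of the three pieces of $\eta$ (it vanishes identically for $\tau<1$, and on $\tau>1$ the inequality $\eta''(\tau)+\frac{d-1}{\tau}\eta'(\tau)\leq\beta(d+\beta-2)\tau^{\beta-2}$ is elementary). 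This is strictly simpler than your route because $\Delta$ is local: one checks a pointwise sign instead of balancing a singular integral. Note also that your concavity computation for $h$ is \emph{not} the same as superharmonicity of $\psi-\tilde\psi$ in $\mathbb R^d$ (the radial Laplacian carries the extra term $\frac{d-1}{\tau}\eta'$), so even that step needs to be redone in the correct form. If you want to salvage your proposal, the cleanest fix is to replace the annulus step with exactly the paper's observation.
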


 \begin{proof}
We represent
  $
  (-\Delta)^{\frac{\alpha}{2}}h= -\Delta I_{2-\alpha} h=-I_{2-\alpha} \Delta h,
  $
where $I_{\nu}=(-\Delta)^{-\frac{\nu}{2}}$ is the Riesz potential. Then
 \begin{align*}
(-\Delta)^{\frac{\alpha}{2}}\psi = -I_{2-\alpha}\Delta \psi = -I_{2-\alpha}\Delta \tilde{\psi} -I_{2-\alpha}\Delta (\psi- \tilde{\psi})
\end{align*}
(all identities are in the sense of distributions).
We evaluate the first term in the RHS as
\begin{align*}
- I_{2-\alpha}\Delta \tilde{\psi} = - s^{-\frac{\beta}{\alpha}}\beta(d+\beta-2) I_{2-\alpha}|x|^{\beta-2}= -s^{-\frac{\beta}{\alpha}}\beta(d+\beta-2) \frac{\gamma(d+\beta-2)}{\gamma(d+\beta-\alpha)}|x|^{\beta-\alpha}
\end{align*}
and drop the second term since $-\Delta (\psi-\tilde{\psi}) \geq 0$ (see \cite[Remark 4]{KS} for the calculations).
\end{proof}

\begin{claim}
\label{Cl3}
$$   {\rm div\,}  ( b_{\varepsilon} \psi) \geq  {\rm div\,}( b\tilde{\psi}) -\hat{c}s^{-1} \psi   - U_{\varepsilon} - W_\varepsilon,$$ where $U_\varepsilon(x)=\mathbf{1}_{|x|<1}\kappa(d+\beta-\alpha)(|x|_\varepsilon^{-\alpha}-|x|^{-\alpha})|x|^\beta$ and $W_\varepsilon=c\mathbf{1}_{1\leq |x|\leq 2}|b_{\varepsilon}-b|$ for constants $\hat{c}$ and $c$.
\end{claim}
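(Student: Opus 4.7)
The proof of Claim \ref{Cl3} splits the difference as
\begin{equation*}
{\rm div\,}(b_\varepsilon\psi) - {\rm div\,}(b\tilde\psi) = {\rm div\,}\bigl((b_\varepsilon-b)\tilde\psi\bigr) + {\rm div\,}\bigl(b_\varepsilon(\psi-\tilde\psi)\bigr),
\end{equation*}
noting that the first summand is supported in $\{|x|\le 2\}$ (where $b_\varepsilon\ne b$) and the second in $\{|x|\ge s^{1/\alpha}\}$ (where $\psi\ne\tilde\psi$). Each summand is then estimated from below by a portion of the right-hand side.

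On $\{|x|<1\}$ both $b_\varepsilon=\kappa|x|_\varepsilon^{-\alpha}x$ and $b=\kappa|x|^{-\alpha}x$ are radial and $\tilde\psi=s^{-\beta/\alpha}|x|^\beta$, so the first summand admits an explicit radial formula. Using the algebraic identity $|x|_\varepsilon^{-\alpha-2}|x|^2=|x|_\varepsilon^{-\alpha}-\varepsilon|x|_\varepsilon^{-\alpha-2}$ together with the radial divergence formula $\nabla\cdot(f(|x|)x)=|x|f'(|x|)+d f(|x|)$ applied to $f(r)=(|r|_\varepsilon^{-\alpha}-r^{-\alpha})r^\beta$ gives
\begin{equation*}
{\rm div\,}\bigl((b_\varepsilon-b)\tilde\psi\bigr) = \kappa s^{-\beta/\alpha}\bigl[(d+\beta-\alpha)(|x|_\varepsilon^{-\alpha}-|x|^{-\alpha})|x|^\beta + \alpha\varepsilon|x|^\beta|x|_\varepsilon^{-\alpha-2}\bigr],
\end{equation*}
and dropping the non-negative $\varepsilon$-term leaves (up to the overall factor $s^{-\beta/\alpha}$) precisely $U_\varepsilon$; the residual $s$-scaling is absorbed into $-\hat c s^{-1}\psi$ using $\psi=s^{-\beta/\alpha}|x|^\beta$ on this region together with the strict inequality $\beta<\alpha$. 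On the annulus $\{1\le|x|\le 2\}$, I expand ${\rm div\,}((b_\varepsilon-b)\tilde\psi) = (b_\varepsilon-b)\cdot\nabla\tilde\psi + \tilde\psi\,{\rm div\,}(b_\varepsilon-b)$; the uniform $C^1$ convergence $b_\varepsilon\to b$ on this compact set (from the construction in Section \ref{main_sect}) together with the boundedness of $\tilde\psi$ and $\nabla\tilde\psi$ there produces the $W_\varepsilon$ contribution.

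For the second summand, on $\{|x|\ge s^{1/\alpha}\}$ I write it as $({\rm div\,}b_\varepsilon)(\psi-\tilde\psi) + b_\varepsilon\cdot\nabla(\psi-\tilde\psi)$ and use $|b_\varepsilon|\lesssim 1$, $|{\rm div\,}b_\varepsilon|\lesssim \sigma_2+\kappa|x|^{-\alpha}$, $|\nabla\psi|\lesssim s^{-1/\alpha}\mathbf{1}_{\{s^{1/\alpha}\le|x|\le 2 s^{1/\alpha}\}}$, $|\nabla\tilde\psi|\lesssim s^{-\beta/\alpha}|x|^{\beta-1}$, and the pointwise bound $|x|^{-\alpha}\le s^{-1}$ valid on this region. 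Exploiting $\beta<\alpha$, every resulting factor of the form $s^{-\beta/\alpha}|x|^{\beta-\alpha}$ is dominated by $s^{-1}\psi$. \textbf{The main obstacle} is the bookkeeping in the overlap $\{s^{1/\alpha}\le|x|\le 2\}$, where neither $\psi=\tilde\psi$ nor $b_\varepsilon=b$ holds; this is resolved by splitting further into $\{s^{1/\alpha}\le|x|\le\min(2 s^{1/\alpha},1)\}$ (the transition zone of $\psi$ sitting inside the singularity of $b$) and $\{\max(1,s^{1/\alpha})\le|x|\le 2\}$, and verifying in each case that the two approximation errors combine additively into $-\hat c s^{-1}\psi - U_\varepsilon - W_\varepsilon$ rather than amplifying one another.
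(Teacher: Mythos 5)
Your decomposition, $\mathrm{div}(b_\varepsilon\psi) - \mathrm{div}(b\tilde\psi) = \mathrm{div}\bigl((b_\varepsilon - b)\tilde\psi\bigr) + \mathrm{div}\bigl(b_\varepsilon(\psi - \tilde\psi)\bigr)$, is a genuinely different split from the paper's, which instead writes the difference as $\mathrm{div}\bigl(b(\psi-\tilde\psi)\bigr) + \mathrm{div}\bigl((b_\varepsilon - b)\psi\bigr)$ --- i.e.\ the paper freezes the $\varepsilon$-free drift $b$ in the bulk term and the bounded weight $\psi$ in the $\varepsilon$-correction term, while you freeze $\tilde\psi$ and $b_\varepsilon$ respectively. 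Your choice buys a clean radial formula for the $\varepsilon$-correction on $\{|x|<1\}$, but it costs you on the other side: your bulk term $\mathrm{div}\bigl(b_\varepsilon(\psi-\tilde\psi)\bigr)$ still carries $\varepsilon$-dependence through $b_\varepsilon$ on $\{s^{1/\alpha}\le|x|\le 2\}$, which is precisely what creates your ``main obstacle'' and what the paper's $\varepsilon$-free bulk term $h_1:=\mathrm{div}\bigl(b(\psi-\tilde\psi)\bigr)$, bounded directly by $-\hat c s^{-1}\psi$, is designed to avoid. Also, $\tilde\psi$ is of size $s^{-\beta/\alpha}$ on the annulus $\{1\le|x|\le2\}$, so your first summand there is not bounded by $W_\varepsilon$ with a generic $c$; it acquires an $s$-dependent constant.

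The more serious gap is the claimed absorption of the factor $s^{-\beta/\alpha}$. Your radial computation on $\{|x|<1\}$ correctly produces $\kappa s^{-\beta/\alpha}\bigl[(d+\beta-\alpha)(|x|_\varepsilon^{-\alpha}-|x|^{-\alpha})|x|^\beta + \alpha\varepsilon|x|_\varepsilon^{-\alpha-2}|x|^\beta\bigr]$, i.e.\ $s^{-\beta/\alpha}U_\varepsilon$ plus a nonnegative term, not $U_\varepsilon$. You then assert that the discrepancy $(s^{-\beta/\alpha}-1)U_\varepsilon$ is absorbed into $-\hat c s^{-1}\psi$. For $s<1$ this is false: for fixed $\varepsilon>0$ one has, as $x\to 0$, $|x|_\varepsilon^{-\alpha}$ bounded while $|x|^{-\alpha}\to\infty$, so $U_\varepsilon(x)\sim -\kappa(d+\beta-\alpha)|x|^{\beta-\alpha}$ blows up, whereas $\psi(x)\sim s^{-\beta/\alpha}|x|^{\beta}\to 0$; hence $|U_\varepsilon|/\psi\sim s^{\beta/\alpha}|x|^{-\alpha}\to\infty$ and no multiple of $\psi$ (in particular no $\hat c s^{-1}\psi$ with $\hat c$ independent of $\varepsilon$) dominates $U_\varepsilon$ near the origin. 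Indeed, if such an absorption were possible, the term $U_\varepsilon$ could be dropped from the statement altogether. The correct repair is simply to keep the $s^{-\beta/\alpha}$ factor and redefine $U_\varepsilon$ to include it --- this is harmless, since $s$ is held fixed in the subsequent passage $\varepsilon\downarrow 0$ and all Lemma~\ref{aest_lem} needs downstream is $\|V_\varepsilon\|_1\to 0$; but as written the absorption step is a genuine error.
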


 \begin{proof}
 We represent
 $$
  {\rm div\,}  ( b_{\varepsilon} \psi) =  {\rm div\,} ( b\tilde{\psi})+ \big[  {\rm div\,} ( b_{\varepsilon} \psi)-  {\rm div\,} ( b\tilde{\psi}) \big].$$
	It is the difference $ {\rm div\,} ( b_{\varepsilon} \psi)-  {\rm div\,} ( b\tilde{\psi})$ that we need to estimate from below in terms of $U_{\varepsilon}\tilde{\psi}$ and $cs^{-1} \psi$.
We represent
 \begin{align*}
 [{\rm div\,} ( b_{\varepsilon} \psi)- {\rm div\,} ( b\tilde{\psi})]&=h_1+ {\rm div\,} \big[(b_{\varepsilon}-b)\psi \big], 
 \end{align*}
 where $h_1:={\rm div\,} \big [b(\psi- \tilde{\psi}) \big]$ is zero in $B(0, s^{\frac{1}{\alpha}})$, continuous and vanishes at infinity. (Indeed, on $\{|x| \geq 2\}$ 
$h_{1}=\kappa |x|^{-\alpha}x \nabla (\psi- \tilde{\psi}) + ({\rm div\,} b) (\psi- \tilde{\psi})$, where $|\nabla (\psi- \tilde{\psi})| \leq C_1|x|^{\beta-1}$, $\beta<\alpha$, while $|{\rm div\,} b| \leq C|x|^{-\alpha}$ by our assumption. Hence $h_1(x) \rightarrow 0$ as $x \rightarrow \infty$.) Moreover, a straightforward calculation shows that $$h_1 \geq -\hat{c}s^{-1}\psi.$$
In turn, we bound ${\rm div\,} \big[(b_{\varepsilon}-b)\psi \big]$ from below as follows:

1) On $\{|x| > 2\}$ we have $b_\varepsilon=b$, so ${\rm div\,} \big[(b_{\varepsilon}-b)\psi \big]=0$.

2) On $\{|x| < 1\}$,
\begin{align*}
{\rm div\,} \big[(b_{\varepsilon}-b)\psi \big] & = (b_{\varepsilon}-b)\cdot \nabla \psi  + ({\rm div\,} b_{\varepsilon} - {\rm div\,} b) \psi \\
& \geq \mathbf{1}_{|x|<1} \kappa( |x|_{\varepsilon}^{-\alpha}-|x|^{-\alpha})x \cdot \nabla  |x|^\beta + 
\mathbf{1}_{|x|<1}\kappa(d-\alpha)( |x|_{\varepsilon}^{-\alpha}-|x|^{-\alpha}) |x|^\beta \\
& = \mathbf{1}_{|x|<1}\kappa(d+\beta-\alpha)( |x|_{\varepsilon}^{-\alpha}-|x|^{-\alpha}) |x|^\beta.
\end{align*}

3) On $\{1 \leq |x|\leq 2\}$, $${\rm div\,} \big[(b_{\varepsilon}-b)\psi \big] \geq -c\mathbf{1}_{1 \leq |x| \leq 2}|b_{\varepsilon}-b|$$
for generic $c$.

Thus, everywhere on $\mathbb R^d$
$$
{\rm div\,} \big[(b_{\varepsilon}-b)\psi \big] \geq \mathbf{1}_{|x|<1}\kappa(d+\beta-\alpha)( |x|_{\varepsilon}^{-\alpha}-|x|^{-\alpha}) |x|^\beta -c\mathbf{1}_{1 \leq |x| \leq 2}|b_{\varepsilon}-b|,
$$
as needed.
 \end{proof}

Applying Claims \ref{Cl1}-\ref{Cl3} in \eqref{L_psi} and taking into account that, by our choice of $\beta$, $$- \beta(\beta-2+d)\frac{ \gamma(d+\beta-2)}{\gamma(d+\beta-\alpha)}|x|^{-\alpha}\tilde{\psi}+ {\rm div\,}( b\tilde{\psi})=0,$$ we obtain the assertion of the lemma with $V_\varepsilon:=P_\varepsilon+U_\varepsilon+W_\varepsilon$. \hfill \qed

\bigskip

\appendix

\section{Proof of Claim \ref{claim0}}

\label{claim0_proof}

The proof below follows closely e.g.\,\cite[Proof of Proposition 8]{KS} or \cite[Proof of Theorem 4.2]{KiS}.

Fix $\varepsilon>0$ and put $$u(t):=e^{-t\Lambda^\varepsilon}f, \quad f \in C_c^\infty,$$
where $\Lambda^\varepsilon=-\varepsilon\Delta + A - b \cdot \nabla$, $A:=(-\Delta)^{\frac{\alpha}{2}}$.
First, let $1<r<\infty$. Multiplying the equation $\partial_t u + \Lambda_r^\varepsilon u=0$ by $\bar{u}|u|^{r-2}$ and integrating in the spatial variables, we obtain
\begin{equation}
\label{pr}
\frac{1}{r}\partial_t\|u\|_r^r + \varepsilon\frac{4}{rr'}\|\nabla (u|u|^{\frac{r}{2}-1})\|_2^2  + \Real\langle Au,u|u|^{r-2}\rangle - \Real \langle b_\varepsilon \cdot \nabla u,u|u|^{r-2}\rangle=0.
\end{equation}
Since $-A$ is a Markov generator, we have using \cite[Theorem 2.1]{LS}
\begin{equation*}
\Real\langle Au,u|u|^{r-2}\rangle \geq \frac{4}{rr'}\|A^{\frac{1}{2}}u^{\frac{r}{2}}\|_2^2, \quad u^{\frac{r}{2}}:=u |u|^{\frac{r}{2}-1}.
\end{equation*}
Next, the integration by parts yields
\begin{align*}
 -\Real \langle b_\varepsilon \cdot \nabla u,u|u|^{r-2}\rangle  = \frac{1}{r}\langle {\rm div\,}b_\varepsilon, |u|^r\rangle,
\end{align*}
where on $\{|x|<1\}$ we have
\begin{align*}
{\rm div\,}b_\varepsilon = \kappa(d|x|_\varepsilon^{-\alpha}-\alpha|x|^{-\alpha-2}_\varepsilon|x|^2) \geq  \kappa(d-\alpha)|x|_\varepsilon^{-\alpha} >0,
\end{align*}
and on $\{|x| \geq 1\}$ $|{\rm div\,}b_\varepsilon| \leq \sigma_2$ by our assumption.
Therefore,
\begin{align*}
 -\Real \langle b_\varepsilon \cdot \nabla u,u|u|^{r-2}\rangle  
\geq - \frac{\sigma_2}{r}\langle |u|^r \rangle. 
\end{align*}
Thus, we obtain from \eqref{pr}
\begin{equation}
\label{i_5}
-\partial_t\|u\|_r^r\geq \frac{4}{r'}\|A^{\frac{1}{2}} u^{\frac{r}{2}}\|_2^2 - \sigma_2\|u\|_r^r.
\end{equation}
 From \eqref{i_5} we obtain $\|u(t)\|_r \leq e^{t\omega_r}\|f\|_r$ for appropriate $\omega_r>0$. Hence taking $r \downarrow 1$ and $r \uparrow \infty$, we obtain the first assertion of Claim \ref{claim0}, i.e.\,the quasi contractivitiy of $e^{-t\Lambda_r^\varepsilon}$ in $L^r$, $r \in [1,\infty[$ and its $L^\infty$ contractivity.

Let us prove the ultracontractivity of $e^{-t\Lambda_r^\varepsilon}$. By \eqref{i_5}, 
\[
-\partial_t\|u\|_{2r}^{2r} \geq \frac{4}{(2r)^\prime}\|A^{\frac{1}{2}} u^r\|_2^2 -  \sigma_2\|u\|_{2r}^{2r} , \quad 1\leq r<\infty. 
\]
Using the Nash inequality $\|A^\frac{1}{2} h \|_2^2 \geq C_N \|h\|_2^{2 + \frac{2\alpha}{d}} \|h\|_1^{-\frac{2\alpha}{d}}$ and $\|u(t)\|_r \leq e^{\omega_r t}\|f\|_r$,
integrating the previous inequality (see details e.g.\,in \cite[Proposition 8]{KS}, \cite[Theorem 4.2]{KiS}), we obtain
\begin{equation*}
\|e^{-t \Lambda^\varepsilon_r} \|_{r \rightarrow 2r} \leq c_3 e^{\omega_r t}  t^{-\frac{d}{\alpha} (\frac{1}{r} - \frac{1}{2r})}, \quad t > 0.
\end{equation*}
Now, using either the reproduction property or the Coulhon-Raynaud extrapolation (see e.g.\,\cite[Theorem F.1]{KiS}), we obtain the required ultracontractivity bound.

The previous argument yields: for $u\in D(\Lambda_2^\varepsilon)=\mathcal W^{2,2}$, $\Real\langle \Lambda^\varepsilon_2 u,u\rangle\geq \|A^\frac{1}{2}u\|_2^2$, 
so the fractional Sobolev Embedding Theorem now yields the required Sobolev embedding property. \hfill \qed

\end{document}